\newtheorem{lemma}{Lemma}
\newtheorem{proposition}[lemma]{Proposition}
\newtheorem{remark}[lemma]{Remark}
\newtheorem{theorem}[lemma]{Theorem}
\newtheorem{definition}[lemma]{Definition}
\newcommand{\e}{\varepsilon} 
\title{A Bourgain-Brezis-Mironescu result\\ for fractional thin films}
\author{Andrea Braides\footnote{Department of Mathematics, University of Rome Tor Vergata, via della ricerca scientifica 1, 00133 Rome, Italy, {\tt braides@mat.uniroma2.it}} \ 
and Margherita Solci\footnote{DADU, Universit\`a di Sassari, piazza Duomo 6, 07041 Alghero, Italy
}}
\date{}
\begin{document} 

\maketitle

\begin{abstract} We consider the limit of squared $H^s$-Gagliardo seminorms on thin domains of the form $\Omega_\varepsilon=\omega\times(0,\varepsilon)$ in $\mathbb R^d$.
When $\varepsilon$ is fixed, multiplying by $1-s$ such seminorms have been proved to converge as $s\to 1^-$ to a dimensional constant $c_d$ times the Dirichlet integral on $\Omega_\varepsilon$ by Bourgain, Brezis and Mironescu. In its turn such Dirichlet integrals divided by $\varepsilon$ converge as $\varepsilon\to 0$ to a dimensionally reduced Dirichlet integral on $\omega$. We prove that if we let simultaneously $\varepsilon\to 0$ and $s\to 1$ then these squared seminorms still converge to the same dimensionally reduced limit when multiplied by $(1-s) \varepsilon^{2s-3}$, independently of the relative converge speed of $s$ and $\varepsilon$. This coefficient combines the geometrical scaling $\varepsilon^{-1}$ and the fact that relevant interactions for the $H^s$-Gagliardo seminorms are those at scale $\varepsilon$.
We also study the usual membrane scaling, obtained by multiplying by $(1-s)\varepsilon^{-1}$, which highlighs the {\em critical scaling} $1-s\sim|\log\varepsilon|^{-1}$, and the limit when $\varepsilon\to 0$ at fixed $s$.\end{abstract}

\section{Introduction}
We consider a fractional non-local analog of the variational theory of thin films as 
studied for example in \cite{LDR,BFF} for integral functionals. 
In the local case, one considers a thin domain $\Omega_\e$ 
in $\mathbb R^d$ of the form $\omega\times (0,\e)$ with $\omega\subset\mathbb R^{d-1}$, and describes the 
asymptotic properties, as $\e\to 0$, of minimizers of energies
\begin{equation}\label{1}
\int_{\Omega_\e} W(\nabla u)\,dx
\end{equation}
subjected to suitable boundary conditions on the lateral boundaries 
$(\partial\omega)\times (0,\e)$ and scaled applied forces. 
Besides its own interest in the asymptotic description of thin object,
our interest in the subject is also motivated by a discussion with F.~Murat,
who observed that the difficulty in describing some regimes in the homogenization
of fractional energies (see \cite{BBD}) may be due to their behaviour on some sets that
of thin-film type.

We briefly recall how the local case can be treated.
After scaling energies \eqref{1} as
\begin{equation}\label{1bis}
\frac1\e\int_{\Omega_\e} W(\nabla u)\,dx
\end{equation}
and defining a {\em dimension-reduction convergence} of functions $u_\e$ defined on $\Omega_\e$ to a function $u$ defined on $\omega$, this problem can be recast as the
description of the $\Gamma$-limit of these scaled energies with respect to that convergence. The $\Gamma$-limit results to be described
as a {\em dimensionally reduced} energy of the form
\begin{equation}
\int_{\omega} W_{\rm dr}(\nabla' u)\,dx'
\end{equation}
defined on the $d-1$-dimensional set $\omega$ for suitable 
$W_{\rm dr}$; the apex denotes $d-1$-dimensional quantities.
In the convex case $W_{\rm dr}$ is simply obtained 
by minimizing out the dependence of $W$ on the $d$-th derivative, but 
in the non-convex vector case its characterization involves relaxation and
homogenization arguments (see e.g.~\cite{LDR,BFF}).

The key functional argument in the reasoning just illustrated amounts to find the first ``critical scaling'' $\frac1\e$ so that the scaled 
energies \eqref{1bis} are equi-coercive with respect to dimension-reduction convergence.
Note that this compactness argument depends only on the super-linear growth of $W$, so that
we may take 
$$
\frac1\e\int_{\Omega_\e} |\nabla u|^2\,dx
$$
as a model. This is only done for ease of notation, the case $p>1$ with $p\neq 2$ being completely analogous.

In a non-local model, we may consider as a prototype, in the place of \eqref{1}, the fractional quadratic energies 
\begin{equation}\label{Fes-def}
F_{\e,s}(u)=(1-s) \int_{\Omega_\e\times \Omega_\e} \frac{|u(x)-u(y)|^2}{|x-y|^{d+2s}} dx\,dy,
\end{equation}
defined on the space $H^s(\Omega_\e)$ with $s\in(0,1)$.
By the result of Bourgain, Brezis, and Mironescu \cite{BBM}, letting $s\to 1$
these energies approximate the energy in \eqref{1} with $W(\nabla u)=
c_d|\nabla u|^2$, where 
\begin{equation}
c_d=\frac1{2d}\mathcal H^{d-1}(S^{d-1})
\end{equation} depending on $d$. Hence,
the corresponding dimensionally reduced energy is of the form
\begin{equation}\label{c_d}
F_{\rm dr}(u)= c_d\int_\omega |\nabla' u|^2\,dx'.
\end{equation}
This limit is obtained by letting first $s\to 1$ in \eqref{Fes-def}, dividing the result by $\e$, and then letting 
$\e\to 0$, noting at the same time that the limit is finite only for $u$  not depending on
the `vertical' variable $x_d$. 

\smallskip
In this paper we address the problem of finding the scaling $\lambda_{\e,s}$ for which the functionals 
$\lambda_{\e,s} F_{\e,s}$ are equicoercive with respect to a properly defined dimension-reduction convergence, and compute the dimensionally-reduced limit if $\e\to 0^+$ and $s=s_\e\to 1^-$. 
We find that the correct scaling factor is $\lambda_{\e,s}=\e^{2s-3}$. This scaling can be explained by testing the pointwise convergence of $F_{\e,s}(u)$ when the target function is of the form $u(x)=u(x',x_d)=u(x')$.  Indeed, if $u$ is smooth then the double integral in \eqref{Fes-def} can be restricted to pairs satisfying $|x-y|<\e$, for which the term $u(y)-u(x)$ can be replaced by $\langle \nabla u(x),y-x\rangle$. For such $u$ we have, remembering that $u=u(x')$,
\begin{eqnarray*}
F_{\e,s}(u)&\sim& (1-s) \int_{\Omega_\e} \int_{B_\e(x)}\frac{|\langle \nabla u(x),y-x\rangle|^2}{|x-y|^{d+2s}} dx\,dy
\\
&\sim& (1-s) \frac1d\int_{\Omega_\e} |\nabla u(x)|^2 \,dx \int_{B_\e}\frac{1}{|\xi|^{d+2s-2}} d\xi
\\
&\sim& (1-s) \frac\e{d}\int_{\omega} |\nabla' u(x')|^2 \,dx'\  \mathcal H^{d-1}(S^{d-1})\int_{0}^{\e}{t^{1-2s}} dt
\\
&=& (1-s) \frac\e{d}\int_{\omega} |\nabla' u(x')|^2 \,dx'\  \mathcal H^{d-1}(S^{d-1})\frac{\e^{2-2s}}{2(1-s)} 
\\
&=& \e^{3-2s} c_d\int_{\omega} |\nabla' u(x')|^2 \,dx', 
\end{eqnarray*} 
which gives that the (pointwise) limit of $\e^{2s-3}F_{\e,s}(u)$ is given by \eqref{c_d}.

This argument shows that, contrary to the local case, the factor $\lambda_{\e,s}$ is not purely due to the geometric dimension $\e$, but also takes into account that relevant interactions in the double integral in \eqref{Fes-def} are those with $|x-y|<\e$, which give an extra $\e^{2(1-s)}$. As regards the proof of a compactness result for di\-men\-sion-reduction fractional convergence, it is worth noting that the non-local nature of the Gagliardo seminorm makes it difficult to use scaling arguments in the $d$-th direction as those used in the local case, and we have to resort to a different  argument by discretization. An interesting fact is that the resulting energy after scaling is the $d$-dimensional dimensionally reduced functional in \eqref{c_d}, even for `very thin films', for which some sort of $d-1$-dimensional behaviour could be expected. A heuristic explanation why this does not happen is that, unlike the local case, the Gagliardo seminorm defining a fractional Sobolev space has a kernel with a homogeneity depending on the dimension, which is in a sense incompatible with dimension reduction.

 A different, more usual scaling, is the usual membrane scaling; that is, the one obtained dividing $F_{\e,s}$ by $\e$. This scaling allow to determine a {\em critical scaling}, when 
$$
1-s\sim \frac1{|\log(\e)|};
$$
more precisely, when $\e^{1-s}\to \kappa$, for which the limit is the functional  in \eqref{c_d} multiplied by $\kappa^2$. The corresponding {\em subcritical} and {\em supercritical} regimes correspond to the separation of scales described above, and the case when formally $\e\to 0$ first and then  $s\to 1^-$. In the latter case, the limit is $0$. Note that in the case of a trivial limit; that is, when $\kappa=0$ the scale $\e^{3-2s}$ can be interpreted as the next term in an expansion by $\Gamma$-convergence in the sense of \cite{BT}.

\section{Notation and preliminaries}
In the following, $\omega$ is a bounded connected open subset of $\mathbb R^{d-1}$ and for all $\e>0$
we define the {\em thin film}
$$
\Omega_\e=\omega\times (0,\e)\subset \mathbb R^d.
$$
If $x\in\mathbb R^d$, then we write $x'=(x_1,\ldots, x_{d-1})$, and use the notation $x=(x',x_d)$.
We also write 
$$
\nabla'u=\Big(\frac{\partial u}{\partial x_1},\ldots, \frac{\partial u}{\partial x_{d-1}}\Big).
$$
With a slight abuse of notation, this is done both when $u=u(x')$ and $u=u(x)$. In the second case, we also write the usual gradient as $\nabla u=(\nabla'u, \tfrac{\partial u}{\partial x_d})$.

\subsection{Dimension-reduction convergence}
We first give a notion of convergence of functions $u_\e\in L^1(\Omega_\e)$ to a dimensionally reduced parameter $u\in L^1(\omega)$. It is customary to do this by scaling as follows \cite{LDR,BFF}.

\begin{definition}[dimension-reduction convergence]
We say that $u_\e\in L^1(\Omega_\e)$ {\em (di\-men\-sion-reduction) converge} to $u\in L^1(\omega)$ as $\e\to0$, and we simply use the notation $u_\e\to u$, if $u$ is defined by the following procedure.
 
{\rm1)} The functions $u_\e$ are scaled to a common space by defining $v_\e\in L^1(\Omega)$, where $\Omega=\omega\times(0,1)$, by $v_\e(x)=v_\e(x',x_d)= u_\e(x',\e x_d)$;

{\rm2)} we have the convergence of $v_\e$ to $v$ in $L^1_{\rm loc}(\Omega)$ to some $v=v(x')$; that is, to some $v$  is independent of $x_d$;

{\rm3)} we define the limit $u\in L^1(\omega)$ by the equality $u(x')=v(x')$. 
\end{definition}

In the case of thin films modeled by local energies on Sobolev spaces, this convergence is justified by the following easy compactness result (see for example \cite{GCB} Section 14.1).

\begin{lemma}[(local) dimension-reduction compactness]\label{local-lemma}
Let $u_\e\in H^1(\Omega_\e)$ and suppose that 
$$
\sup_\e\frac1\e\int_{\Omega_\e} |\nabla u_\e|^2dx<+\infty.
$$
Then, up to addition of constants, $u_\e$ is precompact (that is, there exists $c_\e$ such that $u_\e+c_\e$ is precompact) with respect to the convergence above, the limit  $u$ belongs to $H^1(\omega)$ and
$$
\int_\omega |\nabla'u|^2dx'\le \liminf_{\e\to 0}\frac1\e\int_{\Omega_\e} |\nabla u_\e|^2dx.
$$ 
\end{lemma}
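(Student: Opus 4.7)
The plan is to reduce the problem to a standard Rellich--Poincar\'e argument on the fixed domain $\Omega=\omega\times(0,1)$ via the rescaling $v_\e(x',x_d)=u_\e(x',\e x_d)$ already used in the definition of dimension-reduction convergence. A direct change of variables yields
$$
\int_\Omega|\nabla' v_\e|^2\,dx=\frac1\e\int_{\Omega_\e}|\nabla' u_\e|^2\,dx,\qquad \int_\Omega\Big|\frac{\partial v_\e}{\partial x_d}\Big|^2\,dx=\e\int_{\Omega_\e}\Big|\frac{\partial u_\e}{\partial x_d}\Big|^2\,dx,
$$
so that the hypothesis gives $\|\nabla' v_\e\|_{L^2(\Omega)}\le C$ and, crucially, $\|\partial v_\e/\partial x_d\|_{L^2(\Omega)}=O(\e)\to 0$.

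Next I would extract a compact sequence. Since $\omega$ is only assumed bounded and connected (no boundary regularity is stated), I would argue locally: for any smooth open set $A\subset\subset\Omega$ with $A$ a finite union of cubes contained in $\Omega$, the Poincar\'e--Wirtinger inequality on $A$ combined with the gradient bound shows that $v_\e-\fint_A v_\e$ is bounded in $H^1(A)$; Rellich--Kondrachov then gives a subsequence converging strongly in $L^2(A)$, and a diagonal exhaustion of $\Omega$ by such sets $A_k$ produces a single subsequence and constants $c_\e$ such that $v_\e+c_\e\to v$ in $L^1_{\rm loc}(\Omega)$ for some $v\in H^1_{\rm loc}(\Omega)$ with $\nabla' v\in L^2(\Omega)$ (by lower semicontinuity of $\|\nabla'\cdot\|_{L^2}$). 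The vanishing of $\partial v_\e/\partial x_d$ in $L^2(\Omega)$ forces $\partial v/\partial x_d=0$ in the sense of distributions, so $v=v(x')$ is independent of the vertical variable. Setting $u(x')=v(x')$ and using Fubini, $u\in H^1(\omega)$ with $\nabla'u=\nabla'v(\cdot,x_d)$ for almost every $x_d$, and by construction $u_\e+c_\e\to u$ in the dimension-reduction sense.

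Finally, the liminf inequality is immediate from the weak lower semicontinuity of $\|\cdot\|_{L^2}$ on $\Omega$: from $\nabla'v_\e\rightharpoonup\nabla'v$ weakly in $L^2(\Omega;\mathbb R^{d-1})$ (at least along the extracted subsequence) we obtain
$$
\int_\omega|\nabla'u|^2\,dx'=\int_\Omega|\nabla'v|^2\,dx\le\liminf_{\e\to 0}\int_\Omega|\nabla'v_\e|^2\,dx=\liminf_{\e\to 0}\frac1\e\int_{\Omega_\e}|\nabla'u_\e|^2\,dx\le\liminf_{\e\to 0}\frac1\e\int_{\Omega_\e}|\nabla u_\e|^2\,dx.
$$
Since the argument can be repeated along any subsequence, the full liminf inequality follows by the Urysohn property.

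The only mildly delicate point is the absence of regularity hypotheses on $\partial\omega$, which prevents a direct global Poincar\'e--Wirtinger estimate; I handle this by the local-plus-exhaustion argument above. Everything else is a routine consequence of the anisotropic scaling that concentrates the vertical derivative into an $O(\e)$ term.
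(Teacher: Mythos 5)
Your argument is correct and coincides with the standard proof the paper points to (cited as \cite{GCB}, Section 14.1, and sketched at the start of Section 4.1): the anisotropic rescaling turns the hypothesis into $\|\nabla' v_\e\|_{L^2(\Omega)}\le C$ and $\|\partial v_\e/\partial x_d\|_{L^2(\Omega)}=O(\e)$, after which local Poincar\'e--Wirtinger plus Rellich, the distributional identity $\partial v/\partial x_d=0$, and weak lower semicontinuity give exactly the stated compactness and liminf inequality. The local-plus-exhaustion handling of the irregular boundary of $\omega$ is a reasonable refinement and does not change the substance of the argument.
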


\subsection{Fractional energies and their limit as $s\to1$}
If $\Omega\subset \mathbb R^d$ is a bounded connected open set, the fractional Sobolev spaces $H^s(\Omega)$ are defined as the set of functions in $L^2(\Omega)$ such that their {\em Gagliardo seminorm}
$$
[u]_{H^{s}(\Omega)}=\bigg(\int_{\Omega\times \Omega} \frac{|u(x)-u(y)|^2}{|x-y|^{d+2s}} dx\,dy\bigg)^{1/2}
$$
is finite (see \cite{leofrac,DPV}. 

The space $H^1(\Omega)$ is a singular limit of the spaces $H^s(\Omega)$ in the following sense.

\begin{theorem}[Bourgain--Brezis--Mironescu limit theorem]
If $u_s$ is a family of functions with $u_s\in H^s(\Omega)$ and $\sup_s(1-s)[u]_{H^s(\Omega)}<+\infty$,
then, up to subsequences and addition of constants, $u_s$ converges in $L^2(\Omega)$ as $s\to 1$ to a function $u\in H^1(\Omega)$. Furthermore, for $u\in H^1(\Omega)$ we have
$$
\Gamma\hbox{-}\lim_{s\to 1} (1-s)\int_{\Omega\times \Omega} \frac{|u(x)-u(y)|^2}{|x-y|^{d+2s}} dx\,dy=c_d\int_\Omega|\nabla u|^2dx,
$$
with $\displaystyle c_d=\frac{\mathcal H^{d-1}(S^{d-1})}{2d}$.
\end{theorem}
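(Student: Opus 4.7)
The plan is to establish three separate facts: (i) a compactness statement in $L^2(\Omega)$ for any family $(u_s)$ with $\sup_s(1-s)[u_s]_{H^s(\Omega)}^2<+\infty$; (ii) the $\Gamma$-liminf inequality; and (iii) the matching $\Gamma$-limsup inequality for $u\in H^1(\Omega)$. The common technical device is to rewrite, for $|x-y|<1$,
$$
(1-s)\frac{|u(x)-u(y)|^2}{|x-y|^{d+2s}}=\frac{|u(x)-u(y)|^2}{|x-y|^2}\,\rho_s(x-y),
$$
where $\rho_s(h)=(1-s)|h|^{-(d-2+2s)}\chi_{B_1}(h)$. A direct computation in polar coordinates shows that $\int\rho_s$ is uniformly bounded in $s\in(0,1)$ and that $\rho_s$ concentrates at the origin as $s\to 1^-$; the contribution from $|x-y|\ge 1$ is dominated by $c\,\|u_s\|_{L^2(\Omega)}^2$ and hence is harmless after subtracting mean values.

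For the compactness part, once the Gagliardo seminorm is viewed as an integral of squared difference quotients against the mollifying kernels $\rho_s$, the Bourgain--Brezis--Mironescu compactness argument applies: one establishes a uniform bound on $\|u_s-u_s\ast\eta_\delta\|_{L^2}$ in terms of $\delta$ and of the energy, so that $u_s$ differs from the smooth family $u_s\ast\eta_\delta$ by an $L^2$-error vanishing with $\delta$, uniformly in $s$. The Riesz--Fr\'echet--Kolmogorov theorem then yields precompactness of $u_s-\frac{1}{|\Omega|}\int_\Omega u_s\,dx$ in $L^2(\Omega)$, and the $\Gamma$-liminf below ensures that any such limit belongs to $H^1(\Omega)$.

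For the upper bound, I would take $u\in H^1(\Omega)$, approximate it in $H^1$-norm by a sequence $u_n\in C^\infty(\overline\Omega)$, and use the constant recovery sequence $u_s=u_n$. A Taylor expansion at scale $\delta$, exactly as in the heuristic computation of the introduction, together with the identity $\int_{S^{d-1}}|\langle e,\omega\rangle|^2\,d\mathcal H^{d-1}(\omega)=\frac{1}{d}\mathcal H^{d-1}(S^{d-1})$, produces the pointwise limit $c_d\int_\Omega|\nabla u_n|^2$; a diagonal argument then delivers a recovery sequence for arbitrary $u\in H^1(\Omega)$.

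The main obstacle is the $\Gamma$-liminf inequality. One first checks pointwise convergence of the energy on a fixed smooth function, then upgrades this to a genuine lower semicontinuity statement with respect to the $L^2$ convergence $u_s\to u$. The delicate point is to reconcile the simultaneous concentration of $\rho_s$ with the convergence of $u_s$; the standard technique localizes on small balls where $u$ is well-approximated by affine functions, estimates the cross terms by Cauchy--Schwarz, and controls the remainders uniformly in $s$ using the energy bound. Combining the three steps yields the stated $\Gamma$-convergence with the constant $c_d=\mathcal H^{d-1}(S^{d-1})/(2d)$.
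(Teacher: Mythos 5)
This theorem is not proved in the paper at all: it is quoted as a known result from Bourgain--Brezis--Mironescu (the compactness and pointwise limit) and, for the $\Gamma$-convergence formulation, from the subsequent literature (Ponce), so there is no internal proof to compare against. Judged on its own, your outline does follow the classical strategy -- rewriting $(1-s)|x-y|^{-(d+2s)}$ as a difference quotient against the normalized kernels $\rho_s$, which indeed have uniformly bounded mass and concentrate at the origin, and obtaining the constant $c_d$ from $\int_{S^{d-1}}|\langle e,\omega\rangle|^2\,d\mathcal H^{d-1}=\tfrac1d\mathcal H^{d-1}(S^{d-1})$ -- and the limsup-by-density argument is fine (modulo the tacit assumption, shared by the paper, that $\Omega$ is regular enough for $C^\infty(\overline\Omega)$ to be dense in $H^1(\Omega)$ and for the extension/compactness arguments to apply).

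However, as a self-contained proof the proposal has two genuine gaps. First, the compactness step rests entirely on the unproven estimate $\|u_s-u_s\ast\eta_\delta\|_{L^2}\le C(\delta)\,\big((1-s)[u_s]^2_{H^s}\big)^{1/2}$ with $C(\delta)\to 0$; invoking ``the Bourgain--Brezis--Mironescu compactness argument'' here is circular in a blind proof, since that estimate \emph{is} the content of their compactness theorem. Second, and more importantly, the $\Gamma$-liminf -- which you yourself identify as the main obstacle -- is dispatched by naming ``the standard technique'' (localization on balls where $u$ is nearly affine, Cauchy--Schwarz on cross terms) without carrying it out, and as described it does not address the actual difficulty: the kernels concentrate while the competitors $u_s$ converge only in $L^2$, so pointwise information about the limit $u$ on small balls cannot be transferred to $u_s$ directly. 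The standard resolution is different: mollify $u_s$ at a fixed scale $\delta$, use Jensen's inequality to show that the energy of $u_s\ast\eta_\delta$ on an inner domain $\Omega_\delta$ is bounded by the energy of $u_s$, pass to the limit in $s$ on the smooth, locally $C^1$-convergent family $u_s\ast\eta_\delta\to u\ast\eta_\delta$ to get $c_d\int_{\Omega_\delta}|\nabla(u\ast\eta_\delta)|^2$ below the liminf, and only then let $\delta\to 0$. Without this (or an equivalent blow-up/convexity argument) the lower bound, and hence the stated $\Gamma$-convergence, is not established.
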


Contrary to the local case, the form of the Gagliardo seminorm is dependent on the dimension, so that the same expression may have different implications. In particular, we will use the following characterization of constant functions (see \cite[Proposition 2]{brezis2002recognize}).

\begin{proposition}\label{constant}
Let $\Omega$ be a connected open subset of $\mathbb R^k$ and $u:\Omega\to \mathbb R$ is a measurable function such that
$$
\int_{\Omega\times \Omega} \frac{|u(x)-u(y)|^2}{|x-y|^{k+2}} dx\,dy<+\infty,
$$
then $u$ is constant.
\end{proposition}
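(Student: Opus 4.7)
The plan is to reduce the claim to a local statement on a ball $B$ with $\overline{B}\subset\Omega$, where mollification is possible; show that the mollified function still satisfies the critical Gagliardo bound; and then exploit smoothness to force $\nabla u\equiv 0$ by a polar-coordinates estimate near the diagonal. Once $u$ is shown to be a.e.\ constant on each such $B$, a standard chain-of-balls argument using connectedness of $\Omega$ yields the global claim.

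First, fix such a ball $B$ and a standard mollifier $\phi_\delta$ with $0<\delta<\mathrm{dist}(B,\partial\Omega)$, and set $u_\delta=u*\phi_\delta$, which is smooth on $B$. By Jensen's inequality applied to the probability measure $\phi_\delta(z)\,dz$,
$$|u_\delta(x)-u_\delta(y)|^2\le\int\phi_\delta(z)\,|u(x-z)-u(y-z)|^2\,dz,$$
and after translating the $z$-variable and using Fubini one obtains
$$\int_{B\times B}\frac{|u_\delta(x)-u_\delta(y)|^2}{|x-y|^{k+2}}\,dx\,dy\le\int_{\Omega\times\Omega}\frac{|u(x)-u(y)|^2}{|x-y|^{k+2}}\,dx\,dy<+\infty.$$

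Second, I claim that finiteness of this integral for a \emph{smooth} function $v$ on $B$ forces $\nabla v\equiv 0$ on $B$. If not, pick $x_0\in B$ with $\nabla v(x_0)\ne 0$ and set $\sigma_0=\nabla v(x_0)/|\nabla v(x_0)|$. By continuity of $\nabla v$ there exist $\rho>0$ with $\overline{B_{2\rho}(x_0)}\subset B$, an open spherical cap $\Sigma\subset S^{k-1}$ around $\sigma_0$, and $c_0>0$ such that $\nabla v(x)\cdot\sigma\ge c_0$ for all $x\in B_{2\rho}(x_0)$ and $\sigma\in\Sigma$. Then $v(x+r\sigma)-v(x)\ge c_0 r$ for $x\in B_\rho(x_0)$, $\sigma\in\Sigma$, $0<r<\rho$, and passing to polar coordinates $y=x+r\sigma$, $dy=r^{k-1}\,dr\,d\sigma$ yields
$$\int_{B\times B}\frac{|v(x)-v(y)|^2}{|x-y|^{k+2}}\,dx\,dy\ge c_0^2\,|B_\rho(x_0)|\,\mathcal{H}^{k-1}(\Sigma)\int_0^\rho\frac{dr}{r}=+\infty,$$
a contradiction. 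Hence $u_\delta$ is constant on the connected open set $B$.

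Finally, letting $\delta\to 0$, $u_\delta\to u$ in $L^1(B)$, so the constants $c_\delta$ converge and $u$ is a.e.\ constant on $B$; since $B$ with $\overline B\subset\Omega$ is arbitrary and $\Omega$ is connected, $u$ is a.e.\ constant on $\Omega$. The main subtle step is the Jensen estimate: it is exactly what ensures that the critical Gagliardo integral survives mollification, after which the logarithmic divergence of $\int_0^\rho dr/r$ makes the contradiction immediate.
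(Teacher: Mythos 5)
Your proof is correct, and it takes a genuinely different route from the paper, which in fact offers no proof of Proposition \ref{constant} at all: there the statement is simply quoted from \cite[Proposition 2]{brezis2002recognize}. Your argument --- push the critical Gagliardo bound through mollification by Jensen's inequality plus a translation of variables, then rule out $\nabla u_\delta(x_0)\neq 0$ by integrating $|v(x)-v(y)|^2|x-y|^{-k-2}\ge c_0^2 r^{-k}$ in polar coordinates over a spherical cap where $\nabla v\cdot\sigma\ge c_0$, so that the logarithmic divergence of $\int_0^\rho dr/r$ gives a contradiction, and finally a chain-of-balls argument using connectedness --- is a clean, self-contained and entirely local proof; each step (the Jensen estimate, the choice of $\rho$, $\Sigma$, $c_0$ by continuity of $(x,\sigma)\mapsto\nabla v(x)\cdot\sigma$, the lower bound on $B\times B$, and the identification of the limit of the constants $c_\delta$) is sound. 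The only point you should add is at the very beginning: the proposition assumes $u$ merely measurable, while mollification and the convergence $u_\delta\to u$ in $L^1(B)$ require $u\in L^1_{\rm loc}(\Omega)$. This follows quickly from the hypothesis: for a.e.\ $x_0\in\Omega$ the inner integral $\int_\Omega |u(x_0)-u(y)|^2\,|x_0-y|^{-k-2}dy$ is finite, and on any compact $K\subset\Omega$ one has $|x_0-y|\le R_K$, hence $\int_K|u(x_0)-u(y)|^2dy\le R_K^{k+2}\int_\Omega |u(x_0)-u(y)|^2|x_0-y|^{-k-2}dy<+\infty$, so $u\in L^2_{\rm loc}(\Omega)\subset L^1_{\rm loc}(\Omega)$. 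With that one line inserted your proof is complete; compared with the cited result of Brezis (stated for general exponents $p$), your argument is less general but has the advantage of being elementary and of making explicit exactly where the critical exponent $k+2$ enters, namely in the nonintegrability of $1/r$ at the origin.
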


\section{Scaling regimes of fractional thin films}
In this section we compute the pointwise limit on dimensionally reduced (smooth) functions of the functionals $F_{\e,s}$ defined in \eqref{Fes-def}. More precisely, we prove the following statement.

\begin{proposition}\label{limsupc2} Let $u\in C^2(\overline\omega)$, and with an abuse of notation, let $u$ also denote the function $u=u(x')$, independent of the $d$-th variable, which we view as an element of  $H^s(\Omega_\e)$. Then we have
\begin{equation}
\lim_{\e\to 0} \frac{1}{\e^{3-2s}}F_{\e,s}(u)= c_d\int_\omega |\nabla'u|^2dx'
\end{equation}
for all $s=s_\e$ with $s_\e\to 1^-$ as $\e\to 0^+$.
\end{proposition}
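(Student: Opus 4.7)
The plan is to apply Fubini to reduce $F_{\e,s}(u)$ to an effective $(d-1)$-dimensional double integral, localize to the bulk scale via Taylor expansion, and extract the constant by explicit scaling.

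Since $u=u(x')$, integrating in the vertical variables $x_d,y_d$ first and substituting $z=x_d-y_d$ yields
\begin{equation*}
F_{\e,s}(u)=(1-s)\int_{\omega\times\omega}|u(x')-u(y')|^2\,\Phi_{\e,s}(|x'-y'|)\,dx'\,dy',
\end{equation*}
where $\Phi_{\e,s}(r)=2\int_0^\e(\e-z)(z^2+r^2)^{-(d+2s)/2}\,dz$. Setting $\xi'=y'-x'$, I split the $\xi'$-integral at $|\xi'|=\e$. On the far set, the Lipschitz bound $|u(x')-u(x'+\xi')|\le\|\nabla'u\|_\infty|\xi'|$ combined with the trivial estimate $\Phi_{\e,s}(r)\le\e^2 r^{-d-2s}$ produces a contribution of order $(1-s)\e^{3-2s}/(2s-1)$, which is negligible after dividing by $\e^{3-2s}$. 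On the near set $|\xi'|<\e$, a Taylor expansion gives $|u(x'+\xi')-u(x')|^2=|\nabla'u(x')\cdot\xi'|^2+O(\|u\|_{C^2}^2|\xi'|^3)$, and the cubic remainder integrates to $O((1-s)\e^{4-2s})$, also negligible.

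For the quadratic main term I restrict $x'$ to $\omega'_\e=\{x'\in\omega:\operatorname{dist}(x',\partial\omega)>\e\}$ so that the ball $\{|\xi'|<\e\}$ lies inside $\omega-x'$; the boundary strip has measure $O(\e)$ and contributes negligibly after normalization. Rotational symmetry in $\xi'\in\mathbb R^{d-1}$ replaces $|\nabla'u(x')\cdot\xi'|^2$ by $|\nabla'u(x')|^2|\xi'|^2/(d-1)$ under the integral, and rescaling $r=\e t$, $z=\e w$ factors out exactly $\e^{3-2s}$:
\begin{equation*}
\int_{|\xi'|<\e}|\xi'|^2\Phi_{\e,s}(|\xi'|)\,d\xi'=2\mathcal H^{d-2}(S^{d-2})\,\e^{3-2s}I(s),\quad I(s)=\int_0^1\!\int_0^1\frac{t^d(1-w)}{(t^2+w^2)^{(d+2s)/2}}\,dw\,dt.
\end{equation*}
A further polar change of variable in the $(t,w)$-plane, in which the summand $r_{\max}^{2-2s}/(2-2s)$ is dominant and $(1-s)r_{\max}^{3-2s}/(3-2s)$ vanishes, then gives $(1-s)I(s)\to\tfrac12\int_0^{\pi/2}\cos^d\phi\,d\phi$ as $s\to1^-$.

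The main obstacle is the constant identification. Assembling the above, the prefactor of $\int_\omega|\nabla'u|^2\,dx'$ produced by the limit is $\mathcal H^{d-2}(S^{d-2})(d-1)^{-1}\int_0^{\pi/2}\cos^d\phi\,d\phi$, and this must be shown to coincide with $c_d=\mathcal H^{d-1}(S^{d-1})/(2d)$. The equality reduces to the Beta-function ratio $\int_0^{\pi/2}\sin^{d-2}\phi\,d\phi/\int_0^{\pi/2}\cos^d\phi\,d\phi=d/(d-1)$, used together with the sphere recursion $\mathcal H^{d-1}(S^{d-1})=2\mathcal H^{d-2}(S^{d-2})\int_0^{\pi/2}\sin^{d-2}\phi\,d\phi$.
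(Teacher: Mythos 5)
Your proof is correct, but it takes a genuinely different route from the paper's. The paper stays with the $d$-dimensional double integral: it truncates to interactions $|x-y|<r\e$, Taylor-expands, and evaluates the spherical average $\frac1d\int_{B_{r\e}}|\xi|^{2-d-2s}\,d\xi$, so the constant $c_d=\mathcal H^{d-1}(S^{d-1})/(2d)$ drops out directly from the symmetry of the $d$-dimensional ball; the sharp value is then recovered by squeezing between $(1-2r)c_d$ and $c_d$ and letting the auxiliary parameter $r\to 0$. You instead exploit $u=u(x')$ from the start, Fubini out the vertical variables to produce the effective $(d-1)$-dimensional kernel $\Phi_{\e,s}$, and an exact rescaling factors out $\e^{3-2s}$ times the $\e$-independent quantity $I(s)$, whose $s\to 1^-$ behaviour you compute in polar coordinates; the price is that the limiting prefactor appears as $\mathcal H^{d-2}(S^{d-2})(d-1)^{-1}\int_0^{\pi/2}\cos^d\phi\,d\phi$ and must be matched to $c_d$, which your Wallis/Beta identity $\int_0^{\pi/2}\sin^{d-2}\phi\,d\phi\big/\int_0^{\pi/2}\cos^d\phi\,d\phi=d/(d-1)$ together with the sphere recursion indeed accomplishes (I checked: both identities and the final matching are correct). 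Your route buys the absence of any truncation parameter and a clean separation between the exact $\e$-scaling and the $s$-asymptotics concentrated in $I(s)$, which makes the joint limit $s=s_\e\to1$ particularly transparent; the paper's route buys the constant without any special-function computation, and its intermediate estimates are reused later (Remark \ref{rem-nd} and Theorem \ref{ftf-th}). Your error terms (far field of relative size $(1-s)/(2s-1)$, cubic Taylor remainder of relative size $(1-s)\e$, boundary strip of relative size $\e$) are all of the right order and uniform in the relation between $s_\e$ and $\e$, so the argument is complete up to writing out these routine bounds in full.
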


\begin{proof}
We first note that 
\begin{equation}\label{8}
\limsup_{\e\to 0} \frac{1-s}{\e^{3-2s}} \int_{\{(x,y)\in\Omega_\e: |x-y|>r\e\}}\frac{|u(x)-u(y)|^2}{|x-y|^{d+2s}} dx\,dy=0
\end{equation}
for all $r>0$.
Indeed, if $L$ is such that $|u(x)-u(y)|\le L|x-y|$, then we have
\begin{eqnarray*}&&
\hskip-1cm\int_{\{(x,y)\in\Omega_\e:\times\Omega_\e |x-y|>r\e\}}\frac{|u(x)-u(y)|^2}{|x-y|^{d+2s}} dx\,dy\\
&\le& \int_{\{(x,y)\in\Omega_\e\times\Omega_\e: |x-y|>r\e\}}L^2|x-y|^{2-d-2s} dx\,dy\\
&\le& C\Big(\int_{\Omega_\e} \int_{\{r\e<|\xi|<2\e\}}|\xi|^{2-d-2s} d\xi\,dy\\
&&+
\int_{\Omega_\e}\int_{\{y\in\Omega_\e: |x'-y'|>\e\}}|x'-y'|^{2-d-2s} dx\,dy\Big)\\\\
&\le& C\Big( \mathcal H^{d-1}(S^{d-1})\e|\omega| \int_{r\e}^{2\e}t^{1-2s} dt +
\mathcal H^{d-2}(S^{d-2})\e^2 |\omega| \int_{2\e}^{\infty}t^{-2s}dt\Big)
\\
&\le& C\Big(\frac{\e}{2(1-s)}((r\e)^{2-2s}-(2\e)^{2-2s})+\frac{\e^2}{2s-1}(2\e)^{1-2s}\Big)\\
&\le& C\Big( \frac{\e^{3-2s}}{1-s}(r^{2-2s}-2^{2-2s})+ \e^{3-2s}\Big).
\end{eqnarray*}
Hence, we have
\begin{equation}\label{a1}
\frac{1-s}{\e^{3-2s}} \int_{\{(x,y)\in\Omega_\e: |x-y|>r\e\}}\frac{|u(x)-u(y)|^2}{|x-y|^{d+2s}} dx\,dy\le C\big(r^{2-2s}-2^{2-2s}+ 1-s\big)
\end{equation}
Letting $s\to 1$, we have \eqref{8}.

From \eqref{8}, we obtain that the asymptotic behaviour of $\frac{1}{\e^{2-2s}}F_{\e,s}(u)$ is the same as that
of
$$
\frac{1-s}{\e^{3-2s}} \int_{\{(x,y)\in\Omega_\e: |x-y|<r\e\}}\frac{|u(x)-u(y)|^2}{|x-y|^{d+2s}} dx\,dy
$$
with truncated range of interactions.

We now simplify the asymptotic analysis when $|x-y|<r\e$. We can write
$$
u(x)-u(y)= \langle \nabla u(x), x-y\rangle + O(|x-y|^2)
$$
uniformly in $x$, so that, with fixed $\eta>0$
\begin{eqnarray*}
||u(x)-u(y)|^2- |\langle \nabla u(x), x-y\rangle|^2|&\le& \eta|\langle \nabla u(x), x-y\rangle|^2 +C_\eta |x-y|^4\\
&\le& \eta C|x-y|^2 +C_\eta |x-y|^4,
\end{eqnarray*}
and
\begin{eqnarray}\label{a2}\nonumber
&&\hskip-1.5cm\bigg|\int_{\{(x,y)\in\Omega_\e: |x-y|<r\e\}}\frac{|u(x)-u(y)|^2}{|x-y|^{d+2s}} dx\,dy
\\ \nonumber
&& -\int_{\{(x,y)\in\Omega_\e: |x-y|<r\e\}}\frac{|\langle \nabla u(x), x-y\rangle|^2}{|x-y|^{d+2s}} dx\,dy\bigg|\\ \nonumber
&\le&  \eta C\e|\omega|\int_{|\xi|<r\e\}}{|\xi|^{2-d-2s}} d\xi+C_\eta \e|\omega|\int_{|\xi|<r\e\}}{|\xi|^{4-d-2s}} dx\,dy\\\nonumber
&\le&  C\Big( \eta \frac{1}{1-s}\e^{3-2s}+C_\eta \e^{5-2s}\Big)\\
&=&  C\frac{1}{1-s}\e^{3-2s}\Big(  \eta+ C_\eta (1-s)\e^2\Big).
\end{eqnarray}
Letting $\e\to 0^+$  first, by the arbitrariness of $\eta$ and this estimate, together with \eqref{8}, we also have that the asymptotic behaviour of ${\e^{2s-3}}F_{\e,s}(u)$ is the same as that
of
$$
\frac{1-s}{\e^{3-2s}} \int_{\{(x,y)\in\Omega_\e: |x-y|<r\e\}}\frac{|\langle \nabla u(x), x-y\rangle|^2}{|x-y|^{d+2s}} dx\,dy.
$$

We now take $r<\frac12$, so that 
\begin{eqnarray*}
&&\hskip-1cm\int_{\{(x,y)\in\Omega_\e: |x-y|<r\e\}}\frac{|\langle \nabla u(x), x-y\rangle|^2}{|x-y|^{d+2s}} dx\,dy\\
&\ge &\int_{\omega\times (r\e,(1-r)\e)}\int_{B_{r\e}(x)}\frac{|\langle \nabla u(x), x-y\rangle|^2}{|x-y|^{d+2s}}\,dy\,dx\\
&=&\int_{\omega\times (r\e,(1-r)\e)}\int_{B_{r\e}}\frac{|\langle \nabla u(x), \xi\rangle|^2}{|\xi|^{d+2s}}\,d\xi\,dx\\
&=&\int_{\omega\times (r\e,(1-r)\e)}|\nabla u(x)|^2\frac1d\int_{B_{r\e}}|\xi|^{2-d-2s}\,d\xi\,dx\\
&=&\int_{\omega\times (r\e,(1-r)\e)}|\nabla u(x)|^2c_d(r\e)^{2-2s}\,dx\\
&=&(1-2r)r^{2-2s}\frac{\e^{3-2s}}{1-s}c_d\int_{\omega}|\nabla' u(x')|^2dx'.
\end{eqnarray*}
Between the third and fourth line of the previous formula, we have used the remark that, by the symmetry of the domain of integration, we have 
\begin{eqnarray*}
\int_{B_{r\e}}\frac{|\langle \nabla u(x), \xi\rangle|^2}{|\xi|^{d+2s}}\,d\xi&=&|\nabla u(x)|^2 \int_{B_{r\e}}\frac{|\langle e_j, \xi\rangle|^2}{|\xi|^{d+2s}}\,d\xi\\
&=&|\nabla u(x)|^2 \frac1d\int_{B_{r\e}}\frac{|\xi|^2}{|\xi|^{d+2s}}\,d\xi
\end{eqnarray*}
for all elements of the canonical basis $\{e_1,\ldots, e_d\}$.
Hence,
\begin{equation}\label{9}
\liminf_{\e\to 0}\frac{1}{\e^{3-2s}}F_{\e,s}(u)\ge (1-2r)c_d\int_{\omega}|\nabla' u(x')|^2dx'
\end{equation}
for all $r<\frac12$. Conversely, for all $r>0$ we have
\begin{eqnarray}\label{a3}\nonumber
&&\hskip-1cm\int_{\{(x,y)\in\Omega_\e: |x-y|<r\e\}}\frac{|\langle \nabla u(x), x-y\rangle|^2}{|x-y|^{d+2s}} dx\,dy\\ \nonumber
&\le &\int_{\omega\times (0,\e)}\int_{B_{r\e}(x)}\frac{|\langle \nabla u(x), x-y\rangle|^2}{|x-y|^{d+2s}}\,dy\,dx
\\
&=&r^{2-2s}\frac{\e^{3-2s}}{1-s}c_d\int_{\omega}|\nabla' u(x')|^2dx',
\end{eqnarray}
repeating the same computations as above, so that 
\begin{equation}\label{10}
\limsup_{\e\to 0}\frac{1}{\e^{3-2s}}F_{\e,s}(u)\le c_d\int_{\omega}|\nabla' u(x')|^2dx'.
\end{equation}
The claim follows from \eqref{9} and \eqref{10} by letting $r\to 0$.
\end{proof}

\begin{remark}\label{rem-nd}\rm
From \eqref{a1}, \eqref{a2} and \eqref{a3}, we obtain that $F_{\e,s}(u)\le\e^{3-2s} C$ for all $s$, with $C$ depending on $u$ but independent of $s$. In particular, we have
\begin{equation}\label{a4}
\lim_{\e\to 0} \lambda_{\e,s}F_{\e,s}(u)=0
\end{equation}
if $\lambda_{\e,s}=o(\e^{3-2s})$, independently whether $s\to 1$ or not.
\end{remark}

\section{Dimension-reduction convergence and compactness}

In this section we prove a $\Gamma$-convergence result when the energies are scaled as in the previous section.

\subsection{Discretization of Gagliardo seminorms}
We first note that the proof of the compactness result in Lemma \ref{local-lemma}
heavily relies on that fact that we may write
\begin{eqnarray*}
\frac1\e\int_{\Omega_\e} |\nabla u_\e|^2dx&=&\int_{\omega\times(0,1)}|\nabla' v_\e|^2dx+\frac1{\e^2} \int_{\omega\times(0,1)}\Big|\frac{\partial v_\e}{\partial x_d}\Big|^2dx\\
&\ge& \int_{\omega\times(0,1)}|\nabla v_\e|^2dx,
\end{eqnarray*}
which at the same time proves compactness for $v_\e$ in $H^1(\omega\times (0,1))$ and that $\frac{\partial v_\e}{\partial x_d}$ tends to $0$.
Unfortunately, this decoupling of the `horizontal' and `vertical' derivatives is not immediate for Gagliardo seminorms.
In order to bypass this difficulty we will use a discretization argument coupled with Lemma \ref{local-lemma}.

\smallskip
Following the notation in \cite{Solci-vortices} (see also \cite{BBD}) we define the {\em set of orthonormal bases} (Stiefel manifold) of $\mathbb{R}^d$
\[
V:=\{\overline{\nu}=(\nu_1,...,\nu_d) : \nu_j \in S^{d-1} \text{ such that } \langle\nu_i, \nu_j\rangle=0 \text{ for } i\neq j \}
\]
and observe that $V$ has Hausdorff dimension equal to $k_d:=d(d-1)/2$. 

Given $\rho>0$ and $\overline{\nu}\in V$, we define $$\mathbb Z^d_{\rho\overline{\nu}}:=\{z_1\rho\nu_1+z_2\rho\nu_2+...+z_d\rho\nu_d : (z_1,...,z_d)\in \mathbb Z^d\}$$ and $Q_{\rho\overline{\nu}}$ as the cube described by the orthogonal basis $\{\rho\nu_1,...,\rho\nu_d\}$.

\smallskip
We describe a discretization procedure as follows. Given $\Omega$ an open set in $\mathbb R^d$, for $\overline{\nu}\in V$, $\rho>0$, and $r>0$ we set
\[
\mathcal{I}^r_{\rho\overline{\nu}}(\Omega):=\{k\in \mathbb Z_{\rho\overline{\nu}}^d : rk+rQ_{\rho\overline{\nu}} \subset \subset \Omega\},
\]
and note that for every $\nu\in S^{d-1}$ and  for every $\overline{\nu}\in V$ it holds
\[
\bigcup_{k\in \mathcal{I}^r_{\rho \overline{\nu}}(\Omega)} rk+rQ_{\rho\overline{\nu}} \subseteq \{x\in \Omega : x+r\rho\nu\in\Omega\}.
\]

Given  $r>0$, $\rho\in (0,1)$ and $\overline{\nu}\in V$, for all $u\in L^1(\Omega)$ we define the function $u^{r,\rho\overline{\nu}}$ in two steps:

    (i) first, we assign values on the lattice $\mathcal{I}^r_{\rho\overline{\nu}}$ setting
    \[
    u^{r,\rho\overline{\nu}}(rk)= \displaystyle \frac{1}{|r\rho|^d}\int_{rk+rQ_{\rho\overline{\nu}}} u\,dx \qquad \hbox{ for every } k\in \mathcal{I}^r_{\rho\overline{\nu}};
    \] 
    
    (ii) then, given $k$ in the `interior' of  $\mathcal{I}^r_{\rho\overline{\nu}}(\Omega)$ defined as
    \[
{\mathring{\mathcal I}}^r_{\rho\overline{\nu}}(\Omega):=\{k\in \mathbb Z_{\rho\overline{\nu}}^d : rk+2rQ_{\rho\overline{\nu}} \subset \subset \Omega\},
\]
consider the cube $rk+rQ_{\rho\overline{\nu}}$, $\tau$ 
a permutation of the indices $\{1,...,d\}$, and $rk+r\Delta^\tau_{\rho\overline{\nu}}$ the corresponding simplex in Kuhn's decomposition with vertices $rk, rk+r\Delta^{\tau,0}_{\rho\overline{\nu}},rk+r\Delta^{\tau, 1}_{\rho\overline{\nu}},\ldots, rk+r\Delta^{\tau, d}_{\rho\overline{\nu}}$ (see \cite[Lemma 1]{Kuhn1960}), on such a simplex we define $u^{r,\rho\overline{\nu}}$ as the affine interpolation of the previously defined values $u^{r,\rho \overline{\nu}}(rk), u^{r,\rho \overline{\nu}}(rk+r\Delta^{\tau,0}_{\rho\overline{\nu}}),\, u^{r,\rho\overline{\nu}}(rk+r\Delta^{\tau, 1}_{\rho\overline{\nu}}),..., \,u^{r,\rho\overline{\nu}}(rk+r\Delta^{\tau, d}_{\rho\overline{\nu}})$.

\begin{lemma}\label{lemma-1}
 If $u\in H^s(\Omega)$, we have 
\begin{eqnarray*} &&\hskip-1cm
\int_{\Omega\times\Omega} \frac{|u(x)-u(y)|^2}{|x-y|^{d+2s}} dx\,dy \\
&& \ge \frac{r^{2(1-s)}}{d} \frac{\mathcal H^{d-1}(S^{d-1})}{\mathcal{H}^{k_d}(V)} \int_0^1\int_V \int_{\Omega'}  |\nabla u^{r,\rho\overline{\nu}}|^2\,dx\,   d\mathcal{H}^{k_d}(\overline{\nu})\,\rho^{1-2s}\, d\rho,
\end{eqnarray*}
where $\Omega'$ is any open subset contained in $\bigcup_{k \in {\mathring{\mathcal I}}^r_{\rho\overline{\nu}}(\Omega)} rk+rQ_{\rho\overline{\nu}}$ for all $\overline \nu$ and $\rho\in(0,1]$.
\end{lemma}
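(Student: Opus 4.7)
The plan is to derive the inequality by combining three tools: a polar decomposition of the Gagliardo seminorm, an averaging identity on the Stiefel manifold $V$, and Jensen's inequality applied simplex-by-simplex in Kuhn's decomposition.

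First, in polar coordinates $y = x + t\nu$, one has
$$\int_{\Omega\times\Omega} \frac{|u(x)-u(y)|^2}{|x-y|^{d+2s}}\,dx\,dy = \int_{S^{d-1}}\int_0^\infty t^{-1-2s}\int_{\Omega_{t\nu}}|u(x+t\nu)-u(x)|^2\,dx\,dt\,d\mathcal{H}^{d-1}(\nu),$$
where $\Omega_h := \{x\in\Omega:x+h\in\Omega\}$. Discarding the range $t\ge r$ (which only improves the lower bound) and substituting $t = r\rho$ converts the $t$-integral into one over $\rho\in(0,1)$ and produces the factor $r^{-2s}$ in front. Second, the $O(d)$-invariance of $\mathcal H^{k_d}$ on $V$ yields, for each $\alpha\in\{1,\dots,d\}$,
$$\int_V f(\nu_\alpha)\,d\mathcal H^{k_d}(\overline\nu) = \frac{\mathcal H^{k_d}(V)}{\mathcal H^{d-1}(S^{d-1})}\int_{S^{d-1}} f(\nu)\,d\mathcal H^{d-1}(\nu),$$
so averaging over $\alpha$ rewrites the spherical integral in terms of $V$ while introducing the $1/d$ that appears in the stated coefficient.

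The heart of the proof is the pointwise estimate
$$\int_{\Omega'} |\nabla u^{r,\rho\overline\nu}|^2\,dx \le (r\rho)^{-2}\sum_\alpha \int_{\Omega_{r\rho\nu_\alpha}}|u(x+r\rho\nu_\alpha)-u(x)|^2\,dx.$$
To prove it, fix $k\in\mathring{\mathcal I}^r_{\rho\overline\nu}(\Omega)$ and decompose $rk+rQ_{\rho\overline\nu}$ into its $d!$ Kuhn simplices. On the simplex indexed by the permutation $\tau$, with vertices $v^\tau_0=rk$ and $v^\tau_j = rk+r\rho\sum_{i\le j}\nu_{\tau(i)}$, the affine interpolant has a constant gradient, and since $\overline\nu$ is orthonormal,
$$|\nabla u^{r,\rho\overline\nu}|^2 = (r\rho)^{-2}\sum_{j=1}^d |u^{r,\rho\overline\nu}(v^\tau_j)-u^{r,\rho\overline\nu}(v^\tau_{j-1})|^2.$$
Each increment $u^{r,\rho\overline\nu}(v^\tau_j)-u^{r,\rho\overline\nu}(v^\tau_{j-1})$ is the cube average of $u(x+r\rho\nu_{\tau(j)})-u(x)$ over $v^\tau_{j-1}+rQ_{\rho\overline\nu}$; Cauchy--Schwarz therefore gives
$$|u^{r,\rho\overline\nu}(v^\tau_j)-u^{r,\rho\overline\nu}(v^\tau_{j-1})|^2 \le (r\rho)^{-d}\int_{v^\tau_{j-1}+rQ_{\rho\overline\nu}} |u(x+r\rho\nu_{\tau(j)})-u(x)|^2\,dx.$$
Integrating this bound on the simplex (volume $(r\rho)^d/d!$), summing the $d!$ simplices and then over $k\in\mathring{\mathcal I}^r_{\rho\overline\nu}$, I invoke the identity $\sum_\tau\sum_j F(\tau(j))=d!\sum_\alpha F(\alpha)$ together with the fact that for each fixed $(\tau,j)$ the translates $v^\tau_{j-1}(k)+rQ_{\rho\overline\nu}$ are disjoint and contained, along with their shifts by $r\rho\nu_{\tau(j)}$, in $\Omega$. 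The $d!$ factors then cancel against the $1/d!$ from the simplex volume, producing the displayed estimate.

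Plugging this back, the weights combine as $r^{-2s}\cdot(r\rho)^2\cdot\rho^{-1-2s} = r^{2-2s}\rho^{1-2s}$, giving exactly the constant $r^{2(1-s)}/d\cdot \mathcal H^{d-1}(S^{d-1})/\mathcal H^{k_d}(V)$. The main obstacle is the bookkeeping in the central step: different Kuhn simplices and different permutations produce finite differences based at different shifted lattice points, and one must check carefully that summing these over $k$ and $\tau$ is controlled by the finite-difference integrals over $\Omega_{r\rho\nu_\alpha}$ with no overcounting beyond the $d!$ absorbed by the Kuhn volume. This is also precisely why $\mathring{\mathcal I}^r_{\rho\overline\nu}$ has to be defined with the thickened cube $2rQ_{\rho\overline\nu}$ rather than just $rQ_{\rho\overline\nu}$: one needs enough interior buffer so that both the shifted cubes and their translates by $r\rho\nu_{\tau(j)}$ remain inside $\Omega$.
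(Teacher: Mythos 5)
Your proof is correct: the polar slicing of the Gagliardo seminorm, the $O(d)$-invariance argument identifying the pushforward of $\mathcal H^{k_d}$ restricted to $V$ under $\overline\nu\mapsto\nu_\alpha$ with the uniform measure on $S^{d-1}$ (hence the factor $\mathcal H^{d-1}(S^{d-1})/(d\,\mathcal H^{k_d}(V))$), the computation of the interpolant's gradient through the orthonormal edge increments on each Kuhn simplex followed by Jensen's inequality on cube averages, and the disjointness/containment bookkeeping (which indeed is what the thickened cubes defining ${\mathring{\mathcal I}}^r_{\rho\overline{\nu}}$ are for) all fit together, with the exponents combining to $r^{2(1-s)}\rho^{1-2s}$ exactly as stated. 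The paper itself gives no argument here but refers to formula (27) in Section 3.1 of \cite{BBD} (with the coefficient $a\equiv 1$), and your write-up is essentially a self-contained reconstruction of that derivation, which is precisely what the notation (Stiefel manifold, Kuhn decomposition, interior index set) was introduced for.
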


\begin{proof}
The proof is contained in the first part of \cite[Section 3.1]{BBD}. The claim of the lemma corresponds to (27) therein, taking the coefficient $a$ in that formula identically equal to $1$. 
\end{proof} 

We then obtain the following intermediate estimate.
\begin{proposition}\label{prop4}
Let $u_\e\in H^s(\Omega_\e)$, let $\sigma\in (0,1)$ and define
\begin{equation}\label{sigmae}
u^\sigma_\e(x):=\frac{2(1-s)}{\mathcal{H}^{k_d}(V)} \int_{[0,1]\times V}u^{\sigma\e,\rho\overline{\nu}}_\e(x)\rho^{1-2s}  d\mathcal{H}^{k_d}.
\end{equation}
Then $u^\sigma_\e\in H^1(\Omega_\e)$ and 
\begin{equation}\label{est-2}
c_d\sigma^{2(1-s)}\frac1\e \int_{(\Omega')_\e} |\nabla u^\sigma_\e|^2dx\le \e^{2s-3} F_\e(u_\e)
\end{equation}
for $\e$ small enough for each $\Omega'$ compactly contained in $\omega\times(\sigma,1-\sigma)$,
where
$$
(\Omega')_\e=\big\{(x',x_d): \big(x',\tfrac1\e x_d\big)\in \Omega'\big\}.
$$
\end{proposition}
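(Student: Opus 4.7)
The plan is to apply Lemma~\ref{lemma-1} at the scale $r=\sigma\e$ and then convert the resulting average of $|\nabla u^{\sigma\e,\rho\overline{\nu}}_\e|^2$ into $|\nabla u^\sigma_\e|^2$ by Jensen's inequality. The averaging formula \eqref{sigmae} is engineered for this: since $\int_0^1 \rho^{1-2s}\,d\rho = \frac{1}{2(1-s)}$, the measure
\[
d\mu := \frac{2(1-s)}{\mathcal H^{k_d}(V)}\,\rho^{1-2s}\,d\rho\,d\mathcal H^{k_d}(\overline{\nu})
\]
is a probability measure on $[0,1]\times V$, and $u^\sigma_\e(x) = \int u^{\sigma\e,\rho\overline{\nu}}_\e(x)\,d\mu$.

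First I apply Lemma~\ref{lemma-1} with $\Omega=\Omega_\e$, $u=u_\e$, $r=\sigma\e$, and, in the role of $\Omega'$ of that lemma, the set $(\Omega')_\e$. Multiplying by $1-s$ and using $\mathcal H^{d-1}(S^{d-1})/d = 2c_d$ gives
\[
F_{\e,s}(u_\e) \ge \frac{2c_d(1-s)(\sigma\e)^{2(1-s)}}{\mathcal H^{k_d}(V)} \int_0^1\!\!\int_V\!\!\int_{(\Omega')_\e} |\nabla u^{\sigma\e,\rho\overline{\nu}}_\e|^2\,dx\,d\mathcal H^{k_d}(\overline{\nu})\,\rho^{1-2s}\,d\rho.
\]
Next I differentiate \eqref{sigmae} under the integral sign -- legitimate since each $u^{\sigma\e,\rho\overline{\nu}}_\e$ is piecewise affine on Kuhn simplices with gradient measurable in $(\rho,\overline{\nu})$ -- and apply Jensen's inequality pointwise, against $d\mu$ and the convex function $|\cdot|^2$:
\[
|\nabla u^\sigma_\e(x)|^2 \le \int_{[0,1]\times V} |\nabla u^{\sigma\e,\rho\overline{\nu}}_\e(x)|^2\,d\mu.
\]
Integrating this over $(\Omega')_\e$ and plugging into the previous display, the factor $2(1-s)/\mathcal H^{k_d}(V)$ from $d\mu$ exactly cancels the constants on the right-hand side of the Lemma~\ref{lemma-1} bound, leaving
\[
F_{\e,s}(u_\e) \ge c_d(\sigma\e)^{2(1-s)}\int_{(\Omega')_\e}|\nabla u^\sigma_\e|^2\,dx.
\]
Dividing by $\e^{3-2s}$ and using $(\sigma\e)^{2(1-s)}\e^{-(3-2s)} = \sigma^{2(1-s)}\e^{-1}$ produces \eqref{est-2}; this $L^2$ gradient bound together with $L^2$-integrability of $u^\sigma_\e$ certifies $u^\sigma_\e\in H^1(\Omega_\e)$.

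The main technical obstacle is justifying that $(\Omega')_\e$ is an admissible inner set in Lemma~\ref{lemma-1}, namely
\[
(\Omega')_\e \subset \bigcup_{k\in\mathring{\mathcal I}^{\sigma\e}_{\rho\overline{\nu}}(\Omega_\e)}(\sigma\e k+\sigma\e Q_{\rho\overline{\nu}}) \quad \text{for every } (\rho,\overline{\nu})\in(0,1]\times V.
\]
A doubled cube $\sigma\e k + 2\sigma\e Q_{\rho\overline{\nu}}$ containing a given $x$ sits inside the ball $B(x,2\sqrt{d}\,\sigma\e\rho)$, so this inclusion reduces to the estimate $\mathrm{dist}((\Omega')_\e,\partial\Omega_\e) > 2\sqrt{d}\,\sigma\e$. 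The hypothesis $\Omega'\subset\subset\omega\times(\sigma,1-\sigma)$ provides a horizontal margin independent of $\e$ and a vertical margin bounded below by a positive multiple of $\e$; after taking $\e$ small (and, if necessary, mildly shrinking $\Omega'$ by a dimensional amount, which is innocuous since the right-hand side of \eqref{est-2} is monotone in $\Omega'$) the inclusion is secured.
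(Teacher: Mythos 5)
Your main chain is exactly the paper's own proof: apply Lemma \ref{lemma-1} on $\Omega_\e$ with $r=\sigma\e$ and inner set $(\Omega')_\e$, observe that $\frac{2(1-s)}{\mathcal H^{k_d}(V)}\rho^{1-2s}\,d\rho\,d\mathcal H^{k_d}(\overline\nu)$ is a probability measure on $[0,1]\times V$ with respect to which $u^\sigma_\e$ is the average of the $u^{\sigma\e,\rho\overline\nu}_\e$, and conclude by Jensen's inequality; your constant bookkeeping ($\mathcal H^{d-1}(S^{d-1})/(2d)=c_d$) and the scaling identity $(\sigma\e)^{2(1-s)}\e^{2s-3}=\sigma^{2(1-s)}\e^{-1}$ are correct.

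The gap is in the admissibility check for $(\Omega')_\e$, i.e.\ precisely the step the paper dismisses with ``for $\e$ small''. Your sufficient condition $\mathrm{dist}\big((\Omega')_\e,\partial\Omega_\e\big)>2\sqrt{d}\,\sigma\e$ does not follow from the hypotheses, and taking $\e$ small cannot produce it: in the vertical direction the margin of $(\Omega')_\e$ inside $\omega\times(0,\e)$ is of order $\sigma\e$ (and never more than $\e/2$), so it scales with $\e$ exactly like the required clearance; one would need the fixed compactness margin $\delta$ of $\Omega'$ in $\omega\times(\sigma,1-\sigma)$ to satisfy $\sigma+\delta>2\sqrt{d}\,\sigma$, which is false in general and impossible for any placement of $\Omega'$ once $\sigma\ge 1/(4\sqrt{d})$. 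Nor is ``mildly shrinking $\Omega'$'' innocuous for the statement as given: \eqref{est-2} for a smaller inner set does not imply \eqref{est-2} for the prescribed $\Omega'$ (the monotonicity of the left-hand side runs the other way); shrinking is harmless only in the later applications where one exhausts $\omega\times(\sigma,1-\sigma)$ from inside. Moreover the difficulty is not an artifact of your crude ball bound: for tilted frames $\overline\nu$ and $\rho$ close to $1$ the doubled cubes $\sigma\e k+2\sigma\e Q_{\rho\overline\nu}$ have vertical extent up to $2\sqrt{d}\,\sigma\e\rho$, which exceeds the film thickness $\e$ when $\sigma$ is not small, so ${\mathring{\mathcal I}}^{\sigma\e}_{\rho\overline\nu}(\Omega_\e)$ can even be empty and the asserted inclusion fails as stated. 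You have therefore isolated the genuinely delicate point of the proposition, but your resolution does not close it; one needs either a smallness restriction on $\sigma$ (or a $\rho$-dependent clearance $2\sqrt{d}\,\sigma\e\rho$ together with a restricted range of $\rho$), or some other handling of the vertical direction, before the Lemma \ref{lemma-1}--Jensen argument can be run.
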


\begin{proof} We note that for $\e$ small $(\Omega')_\e$ is contained in $\bigcup_{k \in {\mathring{\mathcal I}}^{\e\sigma}_{\rho\overline{\nu}}(\Omega)} \e\sigma k+\e\sigma Q_{\rho\overline{\nu}}$ for all $\overline \nu$ and $\rho\in(0,1]$. Hence, we can apply Lemma \ref{lemma-1} with $\Omega_\e$ in the place of $\Omega$, $r=\e\sigma$ and $(\Omega')_\e$ in the place of $\Omega'$. We then obtain
\begin{eqnarray*}
\e^{2s-3} F_\e(u_\e)&=&\frac{1-s}{\e^{3-2s}}
\int_{\Omega_\e\times\Omega_\e} \frac{|u_\e(x)-u_\e(y)|^2}{|x-y|^{d+2s}} dx\,dy \\
& \ge& \frac{\sigma^{2(1-s)}}{2d\e} \mathcal H^{d-1}(S^{d-1}) \int_{[0,1]\times V} \int_{(\Omega')_\e}  |\nabla u_\e^{\e\sigma,\rho\overline{\nu}}|^2\,dx\, d\mu_\e(\rho,\overline \nu),\\
\end{eqnarray*}
where 
$$
d\mu_\e(\rho,\overline \nu)= 2(1-s)
 \frac{\rho^{1-2s}}{\mathcal{H}^{k_d}(V)}\, d\rho\,d\mathcal{H}^{k_d}(\overline{\nu})
 $$ 
 gives a probability measure on $[0,1]\times V$. The claim then follows by applying Jensen's inequality.
\end{proof} 

\subsection{Compactness and $\Gamma$-limit}

We first show a compactness result with respect to dimension-reduction convergence for sequences of functions with equibounded $\e^{2s-3} F_\e$

\begin{theorem}[Non-local dimension-reduction compactness]\label{nlcth}
 Let $u_\e$ be such that $\e^{2s-3} F_\e(u_\e)$ is equibounded. Then there exist $u\in H^1(\omega)$ and a subsequence $u_{\e_j}$ such that,  up to addition of constants, $u_{\e_j}\to u$.
\end{theorem}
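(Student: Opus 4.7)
The plan is to reduce to the local dimension-reduction compactness (Lemma~\ref{local-lemma}) via the mollified functions $u_\e^\sigma$ supplied by Proposition~\ref{prop4}, and then to transfer the convergence back to $u_\e$.

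First, we fix a small $\sigma\in(0,1/4)$. Since $\sigma^{2(1-s)}\ge\sigma^2$ uniformly in $s\in(0,1)$, the equiboundedness of $\e^{2s-3}F_\e(u_\e)$ combined with Proposition~\ref{prop4} gives the uniform $H^1$-type bound
\[
\frac1\e\int_{(\Omega')_\e}|\nabla u_\e^\sigma|^2\,dx\le \frac{C}{c_d\,\sigma^{2}}
\]
for every $\Omega'\subset\subset\omega\times(\sigma,1-\sigma)$. Exhausting $\omega\times(\sigma,1-\sigma)$ by such subdomains and applying Lemma~\ref{local-lemma} to $u_\e^\sigma$ on each substrip $(\Omega')_\e$, a diagonal argument produces constants $c_\e^\sigma$ and a subsequence along which $u_\e^\sigma+c_\e^\sigma$ converges in dimension-reduction sense to some $u^\sigma\in H^1_{\rm loc}(\omega)$. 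The vertical boundary layer of width $\sigma\e$ left out by Proposition~\ref{prop4} is handled by repeating the discretization with cubes shifted vertically, which upgrades $u^\sigma$ to $H^1(\omega)$.

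Next, we transfer the convergence from $u_\e^\sigma$ to $u_\e$. The function $u_\e^\sigma$ is obtained by averaging $u_\e$ over cubes of side at most $\sigma\e$ and then interpolating, so a fractional Poincar\'e inequality on each cube yields
\[
\|u_\e-u_\e^\sigma\|_{L^2(\Omega_\e)}^2\le C(\sigma\e)^{2s}\,[u_\e]_{H^s(\Omega_\e)}^2\le \frac{C\sigma^{2s}\e^{3}}{1-s},
\]
and hence, after the vertical rescaling that defines the dimension-reduction convergence, $\|v_\e-v_\e^\sigma\|_{L^2(\Omega)}^2\le C\sigma^{2s}\e^2/(1-s)$. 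A second diagonal argument, choosing $\sigma=\sigma_\e\to 0$ slowly relative to $\e\to 0$ and $s_\e\to 1$, then produces $v_\e+c_\e^{\sigma_\e}\to u$ in $L^2_{\rm loc}(\Omega)$ for some $u\in H^1(\omega)$; independence of $u$ from $x_d$ is automatic from its identification with the limit of $u_\e^{\sigma_\e}$ and can alternatively be verified through Proposition~\ref{constant} in the $x_d$-variable.

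The crux of the proof is this second diagonal step: in regimes where $1-s_\e\ll\e^2$ the naive bound $C\sigma^{2s}\e^2/(1-s)$ does not vanish, and one must instead split the Gagliardo double integral into the short-range part ($|x-y|<\e$), which is essentially local, and the long-range part, negligible by the estimate \eqref{8} from the proof of Proposition~\ref{limsupc2}, combining these with a refined choice of $\sigma_\e$ tuned to the specific rate of $s_\e\to 1^-$. Reconciling all admissible regimes of simultaneous convergence $(\e,s_\e)\to(0,1)$ in a single, uniform argument is the main technical obstacle.
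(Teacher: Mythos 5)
Your first half (the uniform gradient bound for $u^\sigma_\e$ via Proposition~\ref{prop4} and Lemma~\ref{local-lemma}) follows the paper, but the transfer step from $u^\sigma_\e$ back to $u_\e$ contains a genuine gap, and you have in fact identified it yourself: the bound $\|v_\e-v^\sigma_\e\|^2_{L^2(\Omega)}\le C\sigma^{2s}\e^2/(1-s)$ does not vanish when $1-s_\e\ll\e^2$, which is an admissible regime here (no relation between $\e$ and $s$ is assumed). The repair you sketch does not work. First, the estimate \eqref{8} is proved for a \emph{fixed Lipschitz} target $u$ (it uses $|u(x)-u(y)|\le L|x-y|$); it gives no information on the long-range part of the Gagliardo energy of an arbitrary sequence $u_\e$ with only an energy bound, so it cannot be invoked to discard anything here. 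Second, the short-range/long-range splitting does not address the actual source of the blow-up: your Poincar\'e inequality on cubes of side $\sigma\e$ already involves only interactions with $|x-y|\lesssim\sigma\e$, and the factor $1/(1-s)$ enters when you convert the hypothesis $\e^{2s-3}F_\e(u_\e)\le C$ into the unweighted seminorm bound $[u_\e]^2_{H^s(\Omega_\e)}\le C\e^{3-2s}/(1-s)$. Finally, the ``refined choice of $\sigma_\e$ tuned to the rate of $s_\e$'' is left entirely unspecified, and letting $\sigma=\sigma_\e\to0$ undermines your first step, where the limits $u^\sigma$ were extracted for \emph{fixed} $\sigma$ by Lemma~\ref{local-lemma}; you would need a uniform-in-$\sigma$ control linking these limits, which you do not provide. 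As written, the proof covers only the regimes with $1-s_\e\gtrsim\e^2$ (up to a choice of $\sigma_\e$), not the general statement.

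The paper closes exactly this gap by a different comparison. It estimates $\frac1\e\int_{(\Omega')_\e}|u^\sigma_\e-u_\e|\,dx$ in $L^1$ (not $L^2$), splitting through the cube averages $u^{\sigma\e,\rho\overline\nu}_\e(\e\sigma k)$ into the two terms $I^1_\e$ (oscillation of $u_\e$ on each cube, via a scaled Poincar\'e--Wirtinger inequality with the fractional seminorm) and $I^2_\e$ (oscillation of the interpolation, via its gradient and Lemma~\ref{lemma-1}), and --- crucially --- it integrates in $(\rho,\overline\nu)$ against the probability measure $d\mu_\e=2(1-s)\rho^{1-2s}\mathcal H^{k_d}(V)^{-1}d\rho\,d\mathcal H^{k_d}$ that appears in the definition \eqref{sigmae} of $u^\sigma_\e$. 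The factor $(1-s)$ built into $\mu_\e$ turns $[u_\e]_{H^s}$ into $\sqrt{1-s}\,[u_\e]_{H^s}\le C\e^{(3-2s)/2}$, so the resulting bounds \eqref{I1} and \eqref{I2} read
\begin{equation*}
I^1_\e\le C\,\sigma^{s}\,\e\,\sqrt{1-s}\,\sqrt{\e^{2s-3}F_\e(u_\e)},\qquad
I^2_\e\le C\,\sigma^{2s-1}\,\e\,\sqrt{\e^{2s-3}F_\e(u_\e)},
\end{equation*}
which vanish as $\e\to0$ for every fixed $\sigma$ and \emph{every} regime of $s_\e\to1^-$; no tuning of $\sigma$ to the rate of $s_\e$ and no second diagonal argument are needed, and the independence of $u^\sigma$ from $\sigma$ comes for free. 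If you want to keep your $L^2$-based scheme, you must replace the plain fractional Poincar\'e step by one that retains the weight $(1-s)$ coming from the averaging in $\rho$, which is precisely the paper's mechanism.
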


\begin{proof} Let $\sigma\in (0,\tfrac12)$ be fixed, and let  $u^\sigma_\e$ be defined in Proposition \ref{prop4}.
By \eqref{est-2} we can apply Lemma \ref{local-lemma} and obtain that we can suppose that $u^\sigma_\e\to u^\sigma$ as $\e\to 0$. 

We have
\begin{equation}\label{claim-1}
\frac1\e\int_{(\Omega')_\e} |u^\sigma_\e-u_\e|dx\le I^1_\e+I^2_\e,
\end{equation}
where
\begin{eqnarray*}
   &&I^1_\e:= \frac1\e \int_{[0,1]\times V} \sum_{{k \in {\mathring{\mathcal I}}^{\e\sigma}_{\rho\overline{\nu}}(\Omega_\e)}} \int_{{\e\sigma}k+{\e\sigma}Q_{\rho\overline{\nu}}} |u_\e(x) - u^{\sigma\e,\rho\overline{\nu}}_\e(\e\sigma k)|\,dx\,d\mu_\e(\rho,\overline{\nu}) \\ \label{L1conv2}
&&I^2_\e:= \frac1\e \int_{[0,1]\times V} \sum_{{k \in {\mathring{\mathcal I}}^{\e\sigma}_{\rho\overline{\nu}}(\Omega_\e)}} \int_{{\e\sigma}k+{\e\sigma}Q_{\rho\overline{\nu}}} |u^{\sigma\e,\rho\overline{\nu}}_\e(x) - u^{\sigma\e,\rho\overline{\nu}}_\e(\e\sigma k)|\,dx\,d\mu_\e(\rho,\overline{\nu}).
\end{eqnarray*}

To give a bound on $I^1_\e$ and $I_\e^2$, we can proceed as in \cite[Section 3.1]{BBD}, using the refined lower estimate as in Lemma \ref{lemma-1}.

Using  a  scaled Poincar\'e--Wirtinger inequality, we have that
\begin{eqnarray*}\label{attempt4} && \hskip-1cm\nonumber
    \sum_{{k \in {\mathring{\mathcal I}}^{\e\sigma}_{\rho\overline{\nu}}(\Omega_\e)}} \int_{{\e\sigma}k+{\e\sigma}Q_{\rho\overline{\nu}}} |u_\e(x) - u_\e^{\rho\overline{\nu}}({\e\sigma}k)|\,dx  \\ \nonumber 
    &\leq& P|{\e\sigma}\rho|^{\frac{d}{2}+s} \sum_{k \in {\mathring{\mathcal I}}^{\e\sigma}_{\rho\overline{\nu}}(\Omega_\e)}\bigg(\int_{({\e\sigma}k+{\e\sigma}Q_{\rho\overline{\nu}})^2}\frac{|u_\e(x)-u_\e(y)|^2}{|x-y|^{d+2s}}\,dxdy\bigg)^{1/2},
    \end{eqnarray*} 
where $P$ is the Poincar\'e--Wirtinger constant for the  $d$-dimensional unit cube. By this estimate, 
using the concavity of the square root and that $\#{\mathring{\mathcal I}}^{\e\sigma}_{\rho\overline{\nu}}(\Omega_\e)\sim \frac{\e|\omega|}{\e^d\rho^{d}r^d}$ we then have 
\begin{eqnarray}\label{I1}\nonumber
I^1_\e\le P \frac1\e \e^{s}\sigma^s\,2^{1-\frac{d}{2}}\e^{\frac12}|\omega|^{\frac12}(1-s) [u_\e]_{H^{s}(\Omega)} \frac{1}{2-s}
\\
= P\sigma^s\e\sqrt{1-s}\,2^{1-\frac{d}{2}}|\omega|^{\frac12} \frac{1}{2-s}  \sqrt{\e^{2s- 3}F_\e(u_\e)}.
\end{eqnarray}

As for $I^2_\e$, we note that
$$
\frac1\e\int_{{\e\sigma}k+{\e\sigma}Q_{\rho\overline{\nu}}} |u^{{\sigma\e},\rho\overline{\nu}}_\e(x) - u_\e^{{\sigma}\e,\rho\overline{\nu}}({\e\sigma}k)|\,dx\le\sigma\rho \sqrt d\int_{{\e\sigma}k+rQ_{\rho\overline{\nu}}} |\nabla u^{{\sigma\e},\rho\overline{\nu}}_\e(x)|\,dx.
$$
This implies, using Lemma \ref{lemma-1}, that
\begin{eqnarray}\label{I2}\nonumber
&&\hskip-1cmI^2_\e\le \sigma\sqrt\e \sqrt{d\, 2^{-d}|\omega|} \Bigl(  \int_{[0,1]\times V} \sum_{k \in {\mathring{\mathcal I}}^{\e\sigma}_{\rho\overline{\nu}}(\Omega_\e)}\int_{\e\sigma k+\e\sigma Q_{\rho\overline{\nu}}} |\nabla u_\e^{\sigma\e,\rho\overline{\nu}}(x)|^2 dx\, d\mu_\e(\rho,\overline{\nu}) \Bigr)^{\frac{1}{2}}\\
     &&\hskip-.5cm\le\e\,  \sigma^{2s-1} \sqrt{\frac{d|\omega|}{2^d{c_d}}} \big(\e^{2s- 3}F_\e(u_\e)\big)^{\frac12}.
\end{eqnarray}
From \eqref{claim-1}, \eqref{I1}, and \eqref{I2} we obtain that $u_\e\to u^\sigma$ in $L^1$. In particular we obtain that $u^\sigma$ is independent of $\sigma$. 
\end{proof}

\begin{theorem}\label{Gammath} Let $s=s_\e\to 1^-$ as $\e\to 0$. 
Then we have
$$
\Gamma\hbox{-}\lim_{\e\to 0} \e^{2s-3} F_\e(u)= c_d\int_\omega |\nabla'u|^2dx'
$$
for all $u\in H^1(\omega)$.
\end{theorem}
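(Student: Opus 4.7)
The proof will naturally split into the $\Gamma$-limsup (recovery sequence) and the $\Gamma$-liminf (lower bound) inequalities.

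For the $\Gamma$-limsup, I would rely directly on Proposition \ref{limsupc2}, which furnishes the pointwise limit for $u\in C^2(\overline\omega)$ (viewed as independent of $x_d$ and thus as an element of $H^s(\Omega_\e)$). Such a function, trivially extended in the $d$-th variable, dimension-reduction converges to itself (its scaled version on $\omega\times(0,1)$ is exactly $u$). Since $C^2(\overline\omega)$ is dense in $H^1(\omega)$, the limsup inequality for a general $u\in H^1(\omega)$ follows by picking an approximating sequence $u_n\in C^2(\overline\omega)$ with $u_n\to u$ in $H^1(\omega)$, applying Proposition \ref{limsupc2} to each $u_n$, and running the standard diagonal argument using lower semicontinuity of the $\Gamma$-limsup.

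For the $\Gamma$-liminf, let $u_\e\to u$ in the dimension-reduction sense; we may assume $\liminf_\e \e^{2s-3}F_\e(u_\e)<+\infty$. I would apply Proposition \ref{prop4} to get, for every $\sigma\in(0,\tfrac12)$ and every $\Omega'\subset\subset \omega\times(\sigma,1-\sigma)$,
$$
c_d\,\sigma^{2(1-s)}\,\frac{1}{\e}\int_{(\Omega')_\e} |\nabla u^\sigma_\e|^2\,dx \;\le\; \e^{2s-3}F_\e(u_\e).
$$
The same estimates $I^1_\e, I^2_\e\to 0$ used in the proof of Theorem \ref{nlcth} guarantee that the averaged discretizations $u^\sigma_\e$ share the dimension-reduction limit of $u_\e$, namely $u$ itself.

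Now I would specialize to $\Omega'=\omega''\times(\sigma,1-\sigma)$ with $\omega''\subset\subset\omega$, so that $(\Omega')_\e=\omega''\times(\e\sigma,\e(1-\sigma))$ is a slab of thickness $\e(1-2\sigma)$. After translating the vertical variable to reduce to $\omega''\times(0,\e(1-2\sigma))$ and applying Lemma \ref{local-lemma} with the parameter $\e$ replaced by $\e(1-2\sigma)$ to the (restricted) $u^\sigma_\e$, which still dimension-reduction converge to $u|_{\omega''}$, I obtain
$$
\int_{\omega''} |\nabla' u|^2\,dx' \;\le\; \liminf_{\e\to 0} \frac{1}{\e(1-2\sigma)}\int_{(\Omega')_\e} |\nabla u^\sigma_\e|^2\,dx.
$$
Combining these two inequalities and using $\sigma^{2(1-s_\e)}\to 1$ as $\e\to 0$ (since $s_\e\to 1^-$), I deduce
$$
c_d (1-2\sigma)\int_{\omega''} |\nabla' u|^2\,dx' \;\le\; \liminf_{\e\to 0} \e^{2s-3}F_\e(u_\e).
$$
Letting $\omega''\uparrow\omega$ and then $\sigma\to 0^+$ concludes the liminf inequality.

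The main obstacle I expect is bookkeeping in the liminf step: ensuring that the discretizations $u^\sigma_\e$ (which are defined by averaging over the whole parameter space $[0,1]\times V$) really do dimension-reduction converge to the same limit $u$ when restricted to the interior slab $(\Omega')_\e$, and that the resulting loss of mass as $\sigma\to 0^+$ is precisely the $1-2\sigma$ factor, so that no additional contribution from the boundary layers $\omega\times(0,\e\sigma)$ and $\omega\times(\e(1-\sigma),\e)$ is lost. Both points follow from the quantitative estimates obtained in the proof of Theorem \ref{nlcth}, but they must be combined with the absolute continuity of the integral of $|\nabla' u|^2$ with respect to $\omega''$.
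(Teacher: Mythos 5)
Your proposal is correct and follows essentially the same route as the paper: the limsup via Proposition \ref{limsupc2} plus density of smooth functions in $H^1(\omega)$, and the liminf via the estimate \eqref{est-2} of Proposition \ref{prop4}, the convergence $u^\sigma_\e\to u$ from the compactness Theorem \ref{nlcth}, the local Lemma \ref{local-lemma} on the inner slab, and finally $\sigma^{2(1-s_\e)}\to 1$ with $\sigma\to 0$ and exhaustion of $\omega$. If anything, your explicit tracking of the slab-thickness factor $(1-2\sigma)$ is slightly more careful than the paper's terse statement, which absorbs it in the final limit $\sigma\to 0^+$.
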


\begin{proof} Let $u_\e\to u$, and for fixed $\sigma\in(0,\frac12)$ let $u^\sigma_\e$ be defined by \eqref{sigmae}, so that $u^\sigma_\e\to u$ by the previous theorem. Then by \eqref{est-2} we have
\begin{equation}\label{est-22}
 c_d\liminf_{\e\to 0}\sigma^{2(1-s)}\frac1\e \int_{(\Omega')_\e} |\nabla u^\sigma_\e|^2dx\le \liminf_{\e\to 0}\e^{2s-3} F_\e(u_\e).
\end{equation}
Since $\sigma^{2(1-s)}\to 1$ we then obtain
$$c_d\int_{\omega'}|\nabla 'u|^2dx'\le \liminf_{\e\to 0}\e^{2s-3} F_\e(u_\e)
$$
for all $\omega'$ compactly contained in $\omega$, so that the lower bound follows. 

As for the upper bound, this is proved in Proposition \ref{limsupc2} for $u\in C^2(\overline\omega)$.
For $u\in H^1(\omega)$ it suffices to argue by density. 
\end{proof}

\subsection{Thin-film critical regime at the membrane scaling}
When bulk applied forces are considered, the complete functional to study is of the form
$$
F_{\e,s}(u)-\int_{\Omega_\e} g(x')u(x)\,dx.
$$
Since the second integral scales as $\e$, it may be interesting to reread the previous $\Gamma$-convergence theorem when $F_{\e,s}$ is divided only by $\e$. For this scaling, we then have the following result, which can be interpreted as 
giving 
$$
1-s\sim\frac1{|\log\e|}
$$
as a {\em critical regime}.

\begin{theorem}[membrane scaling $\Gamma$-limit]
Let $s=s_\e$. We can suppose that there exists the limit 
$$
\kappa=\lim_{\e\to 0}  \e^{1-s}.
$$
Note that $\kappa\in[0,1]$.
Then the following statements hold.

{\rm i)} If  $\kappa=1$; that is 
$$
1-s<\!<\frac1{|\log\e|},
$$
then 
$$
\Gamma\hbox{-}\lim_{\e\to0} \frac1\e F_{\e,s}(u)=c_d\int_\omega |\nabla'u|^2dx'
$$
with domain $H^1(\omega)$;

{\rm ii)} if $\kappa\in (0,1)$, which is the case when
$$
1-s\sim\frac1{|\log\e|},
$$
 then 
$$
\Gamma\hbox{-}\lim_{\e\to0} \frac1\e F_{\e,s}(u)=\kappa^2 c_d\int_\omega |\nabla'u|^2dx'
$$
with domain $H^1(\omega)$;

{\rm iii)}  if  $\kappa=1$; that is,
$$
1-s>\!>\frac1{|\log\e|},
$$
then the $\Gamma$-limit of $\frac1\e F_{\e,s}(u)=0$ for all functions $u\in L^2(\omega)$.
 
 Moreover,  if $\kappa\ne0$ the functionals $ \frac1\e F_{\e,s}$ are equi-coercive.
\end{theorem}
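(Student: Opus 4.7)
The plan is to reduce the entire statement to Theorem \ref{Gammath} via the trivial identity
$$
\frac{1}{\e}F_{\e,s}(u)=\e^{2(1-s)}\cdot\e^{2s-3}F_{\e,s}(u),
$$
together with the observation that the hypothesis $\e^{1-s}\to\kappa$ immediately yields $\e^{2(1-s)}\to\kappa^2$. Since multiplication by a scalar sequence with a strictly positive limit commutes with $\Gamma$-convergence and preserves equi-coercivity, the regimes $\kappa>0$ should drop out essentially for free from the already established $\Gamma$-limit for $\e^{2s-3}F_{\e,s}$.

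In detail, for cases (i) and (ii) I would argue that, given $u_\e\to u$ in the sense of dimension reduction, Theorem \ref{Gammath} yields
$$
\liminf_{\e\to 0}\frac{1}{\e}F_{\e,s_\e}(u_\e)=\liminf_{\e\to 0}\e^{2(1-s_\e)}\,\e^{2s_\e-3}F_{\e,s_\e}(u_\e)\ge\kappa^2\,c_d\int_\omega|\nabla'u|^2\,dx'.
$$
The recovery sequence produced by Theorem \ref{Gammath} for $u\in H^1(\omega)$ satisfies $\e^{2s-3}F_{\e,s}(u_\e)\to c_d\int_\omega|\nabla'u|^2\,dx'$, so multiplying by $\e^{2(1-s)}\to\kappa^2$ matches the upper bound. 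Equi-coercivity transfers in the same way: any bound $\frac{1}{\e}F_{\e,s}(u_\e)\le M$ implies $\e^{2s-3}F_{\e,s}(u_\e)\le M\,\e^{-2(1-s)}\to M/\kappa^2$, after which Theorem \ref{nlcth} supplies a convergent subsequence.

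Case (iii), with $\kappa=0$, requires slightly more care because the $\Gamma$-limit is asserted to vanish on the whole of $L^2(\omega)$, not merely on $H^1(\omega)$. The liminf inequality is trivial by positivity of $F_{\e,s}$. For the limsup inequality, given $u\in L^2(\omega)$ I would approximate $u$ by functions $u_n\in C^2(\overline\omega)$ with $u_n\to u$ in $L^2(\omega)$; for each fixed $n$, Remark \ref{rem-nd} gives
$$
\frac{1}{\e}F_{\e,s_\e}(u_n)\le\e^{2(1-s_\e)}\,C(u_n)\to 0,
$$
and a standard diagonal extraction produces $u_\e=u_{n(\e)}$, viewed as an $x_d$-independent element of $H^{s_\e}(\Omega_\e)$, that dimension-reduction converges to $u$ and satisfies $\frac{1}{\e}F_{\e,s_\e}(u_\e)\to 0$.

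None of the steps looks to require a new idea; the potential obstacle is the density argument in (iii), and even there the fact that the approximants depend only on $x'$ makes their dimension-reduction convergence automatic. The whole proof is essentially a bookkeeping exercise layered on top of Theorems \ref{Gammath} and \ref{nlcth} and Remark \ref{rem-nd}.
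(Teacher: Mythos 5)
Your proposal is correct and follows essentially the same route as the paper: the paper likewise disposes of (i) and (ii) by observing they are just Theorem \ref{Gammath} rescaled by $\e^{2(1-s)}\to\kappa^2$ (with equi-coercivity inherited from Theorem \ref{nlcth}), and obtains (iii) from the pointwise bound of Proposition \ref{limsupc2} (your Remark \ref{rem-nd}) combined with density and a diagonal argument. Your write-up merely makes explicit the bookkeeping that the paper leaves implicit.
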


\begin{proof}
Claims (i) and (ii) are equivalent to Theorem \ref{Gammath} in the case $\e^{1-s}\to \kappa$. Claim (iii) is an immediate consequence of Proposition \ref{limsupc2} and the density of $H^1(\omega)$ in  $L^2(\omega)$.
\end{proof}


\section{Fractional thin films}
%
We now examine the dimension-reduction process starting from $H^s$ seminorms on thin films with fixed $s\in(\frac12,1)$. For all $\e>0$ we set
\begin{eqnarray*}
G^s_\e(u) = \int_{\Omega_\e\times\Omega_\e} \frac{|u(x)-u(y)|^2}{|x-y|^{d+2s}} dx\,dy.
\end{eqnarray*}
In this section we examine the behaviour of $G^s_\e$ as $\e\to 0$. To that end, we will compute some $\Gamma$-limits with respect to the reduction-dimension convergence $u_\e\to u$ of the scaled functionals
\begin{eqnarray}\label{Gas}
\frac1{\e^\alpha} G^s_\e(u).
\end{eqnarray}

We note that for $\alpha=3-2s$ the functionals $(1-s)\frac1{\e^\alpha} G^s_\e(u)$ are equicoercive with respect to the convergence $u_\e\to u$,
and converge to $c_d\int_\omega|\nabla' u|^2dx'$. We then have 
\begin{equation}\label{df}
\Gamma\hbox{-}\lim_{s\to 1}(1-s)G_s(u)=c_d\int_\omega|\nabla' u|^2dx',
\end{equation}
where
$$
G_s(u)=\Gamma\hbox{-}\lim_{\e\to 0}\frac1{\e^{3-2s}} G^s_\e(u)
$$
(see \cite{DM,B-LN2}).
As for $G_s$, we note that the compactness Theorem \ref{nlcth} still holds for $s$ fixed since in the use of estimates \eqref{I1} and \eqref{I2} it is not necessary to let $s\to 1$ in its proof, so that the domain of the $\Gamma$-limit is still $H^1(\omega)$, and  by \eqref{est-22} we have
\begin{equation}\label{crit-lb}
\Gamma\hbox{-}\liminf_{\e\to 0}\frac1{\e^{3-2s}} G^s_\e(u)\ge (1-2\sigma)\sigma^{2(1-s)}\frac{c_d}{1-s}\int_\omega|\nabla'u|^2dx'
\end{equation}
for all $\sigma\in(0,\frac12)$. Note that, loosely speaking, in this case the kernels in the Gagliardo seminorms, scaled by $\frac1{\e^{3-2s}}$ act as Bourgain-Brezis-Mironescu kernels. We do not compute this limit, but just note that for $u\in C^2(\overline\omega)$ Proposition \ref{limsupc2} provides also an upper bound in terms of the Lipschitz constant of $u$ and its Dirichlet integral (see also the proof of (i) in Theorem \ref{ftf-th}). We only note that in this special case, since the limit is a quadratic form, integral-representation results using the theory of Dirichlet form suggest that the limit is still a constant (behaving as $\frac{c_d}{1-s}$ as $s\to 1$ by \eqref{df}) times the Dirichlet integral  (see \cite{MR4690560}, where it is also shown that this may not be the case if $p\neq 2$). 

\smallskip
These arguments suggest that $\alpha=3-2s$ is a critical scaling for the convergence of the functionals in \eqref{Gas}. Indeed we have the following theorem.

\begin{theorem}\label{ftf-th}
{\rm(i)} If $\alpha< 3-2s$ then we have 
$$
\Gamma\hbox{-}\lim_{\e\to 0} \frac1{\e^\alpha}G^s_\e(u)=0
$$
for all $u\in L^1(\omega)$;

{\rm(ii)} if  $\alpha> 3-2s$ then we have 
$$
\Gamma\hbox{-}\lim_{\e\to 0} \frac1{\e^\alpha}G^s_\e(u)=\begin{cases}0& \hbox{ if $u$ is constant}\\
+\infty & \hbox{ otherwise}\end{cases}
$$
in $L^1(\omega)$.

\end{theorem}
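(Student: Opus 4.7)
The plan for (i) is to observe that the $\Gamma$-$\liminf$ is trivially nonnegative, so only a suitable recovery sequence is required. For $u\in C^2(\overline\omega)$, extending $u$ to $\Omega_\e$ by independence of $x_d$ yields a dimension-reduction recovery sequence, and Remark~\ref{rem-nd} applied at the fixed $s$ gives $(1-s)G^s_\e(u)=F_{\e,s}(u)\le C\e^{3-2s}$ with $C$ depending only on $u$. Hence $\tfrac{1}{\e^\alpha}G^s_\e(u)\le \tfrac{C}{1-s}\e^{3-2s-\alpha}\to 0$ since $3-2s-\alpha>0$. To extend to arbitrary $u\in L^1(\omega)$, I would invoke density of $C^2(\overline\omega)$ in $L^1(\omega)$ together with the standard lower semicontinuity of the $\Gamma$-$\limsup$: approximating $u$ by $u_n\in C^2(\overline\omega)$ in $L^1(\omega)$, one gets $\Gamma$-$\limsup(u)\le\liminf_n\Gamma$-$\limsup(u_n)=0$, and the matching $\liminf\ge0$ closes the argument.

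For (ii), the recovery sequence at a constant $u$ is immediate: $u_\e\equiv u$ gives $G^s_\e(u_\e)=0$. The substantive step is to show $\Gamma$-$\liminf(u)=+\infty$ for non-constant $u\in L^1(\omega)$, which I would do by contradiction. Suppose there is a dimension-reduction converging sequence $u_\e\to u$ with $\tfrac{1}{\e^\alpha}G^s_\e(u_\e)\le M$. Since $\alpha>3-2s$, the rewriting
\[
\e^{2s-3}F_{\e,s}(u_\e)=(1-s)\,\e^{\alpha-(3-2s)}\cdot\tfrac{1}{\e^\alpha}G^s_\e(u_\e)\le M(1-s)\,\e^{\alpha-(3-2s)}
\]
tends to $0$, hence is equibounded. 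The compactness Theorem~\ref{nlcth}, which the paper has already noted remains valid with $s$ fixed, then forces the limit $u$ to lie in $H^1(\omega)$. Applying the lower bound \eqref{crit-lb} at any fixed $\sigma\in(0,\tfrac12)$ yields
\[
(1-2\sigma)\sigma^{2(1-s)}\tfrac{c_d}{1-s}\int_\omega|\nabla' u|^2\,dx'\le \liminf_{\e\to 0}\tfrac{1}{\e^{3-2s}}G^s_\e(u_\e)=0,
\]
so $\int_\omega|\nabla' u|^2\,dx'=0$ and $u$ is constant, contradicting the hypothesis.

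The main delicacy is the reuse of Theorem~\ref{nlcth} at fixed $s$: although that proof was written with $s=s_\e\to 1^-$ in mind, its estimates \eqref{I1} and \eqref{I2} actually produce infinitesimal remainders through the $\sqrt\e$ and $\e$ prefactors rather than through $\sqrt{1-s}$, so the argument carries over verbatim. A minor point worth noting is that the dimension-reduction limit $u_\e\to u$ is already pinned down by the $\Gamma$-$\liminf$ hypothesis, so the \emph{up to additive constants} clause of Theorem~\ref{nlcth} can be ignored and the compactness conclusion identifies the limit with $u$ itself.
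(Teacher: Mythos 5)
Your proposal is correct and follows essentially the same route as the paper: part (i) is obtained from the bound of Remark \ref{rem-nd} (with $s$ fixed, so the factor $\frac{1}{1-s}$ is harmless) on the $x_d$-independent extension of a smooth approximation, concluded by density and lower semicontinuity of the $\Gamma$-$\limsup$, while part (ii) is the comparison argument the paper intends, using the fixed-$s$ validity of Theorem \ref{nlcth} to place the limit in $H^1(\omega)$ and then \eqref{crit-lb} to force $\nabla' u=0$. The only cosmetic difference is that the paper also records an explicit kernel estimate showing the Lipschitz case directly, which your appeal to Remark \ref{rem-nd} replaces.
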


\begin{proof} (i) We note that for $u\in C^2(\omega)$ this follows from Remark \ref{rem-nd}.
It also suffices to note that 
\begin{eqnarray*}
&&\hskip-1cm\int_{\Omega_\e\times\Omega_\e} \frac{|x'-y'|^2}{|x-y|^{d+2s}} dx\,dy\\
&\le& C\e \int_{B^{d-1}_R\times (0,\e)}\frac{|z'|^2}{|z|^{d+2s}} dz\\
&\le& C\Big(\e \int_{B^{d}_\e}\frac{1}{|z|^{d+2s-2}} dz +\e^2 \int_{B^{d-1}_R\setminus B^{d-1}_\e}
\frac{1}{|z'|^{d+2s-2}} dz'\Big) \\
&\le& C\Big(\e \int_{0}^\e t^{1-2s} dt +\e^2 \int_\e^R\tau^{-2s} d\tau\Big) \\
&\le& C\Big(\e\frac{\e^{2-2s}}{1-s}+\e^2\frac{\e^{1-2s}}{2s-1}\Big)= C\e^{3-2s}\Big(\frac{1}{1-s}+\frac{1}{2s-1}\Big).
\end{eqnarray*}
Hence, for $u=u(x')$ Lipschitz we have
$$
\frac1{\e^\alpha}G^s_\e(u)\le C\e^{3-2s-\alpha}=o(1).
$$
For $u\in L^1(\omega)$ we can then argue by density. 

\goodbreak
\smallskip
(ii) the supercritical case follows from \eqref{crit-lb} by comparison
\end{proof}

\begin{remark}\rm
For $\alpha\ge 2$ the theorem can be alternately proved by using the characterization of constant functions in Proposition \ref{constant}.  By comparison, it suffices to consider the case $\alpha=2$.

Note that
$$
\frac1{\e^2}G^s_\e(u)\ge \int_{(\omega\times (0,1))^2} \frac{|v(x)-v(y)|^2}{|x-y|^{d+2s}} dx\,dy,
$$
so that if $\frac1{\e^2}G^s_\e(u_\e)<+\infty$, then the corresponding sequence $v_\e$ has equi-bounded Gagliardo seminorms, and we can suppose it converges to some $v\in H^s(\omega\times (0,1))$. 

Letting $\e\to 0$ we then have 
$$
\liminf_{\e\to0} \frac1{\e^2}G^s_\e(u_\e)\ge \int_{(\omega\times (0,1))^2} \frac{|v(x)-v(y)|^2}{|x'-y'|^{d+2s}} dx\,dy.
$$
Let $I\subset (0,1)$ be an interval, and let $v_I(x')=\int_I v(x',t)dt$. By Jensen's inequality we then have 
$$
\int_{\omega\times\omega} \frac{|v_I(x')-v_I(y')|^2}{|x'-y'|^{d+2s}} dx'\,dy'<+\infty,
$$
which, since $s>\frac12$, also implies that
$$
\int_{\omega\times\omega} \frac{|v_I(x')-v_I(y')|^2}{|x'-y'|^{(d-1)+2}} dx'\,dy'<+\infty
$$
Hence, by Proposition \ref{constant} applied with $\Omega=\omega$ and $k=d-1$, $v_I$ is a constant. By the arbitrariness of $I$ we obtain that $v=v(x_d)$. If $v$ were not constant, then we would have
$$
\int_{(0,1)\times(0,1)}|v(x_d)-v(y_d)|^2dx_d\,dy_d\ \int_{\omega\times\omega}\frac{1}{|x'-y'|^{d+2s}} dx'\,dy'<+\infty,
$$
which is a contradiction since the second double integral is diverging.\end{remark}

\bibliographystyle{abbrv}

\bibliography{references}

\section*{Appendix: an alternative approach for `thick' thin films}
We propose an equivalent way of defining the dimension-reduction convergence, for which a compactness result can be proven directly in the case when $\e$ is `not too small' with respect to $1-s$.

\begin{definition} Let $u_\e\in L^1(\Omega_\e)$ extended to $\omega\times(-\e,\e)$ by reflection with respect to the hyperplane $x_d=0$, and to the stripe $\omega\times \mathbb R$ by $2\e$-periodicity. We say that $u_\e\to u$ if $u\in L^1_{\rm loc}(\omega)$ and, having set $v(x)=u(x')$ we have $u_\e\to v$ in $L^1(\omega'\times (0,1))$ for all $\omega'\subset\!\subset\omega$.
\end{definition}

Note that if $u_\e$ converges to some $v$ in $L^1(\omega'\times (0,1))$  for all $\omega'\subset\!\subset\omega$, then $v=v(x')$. Indeed, let $\Omega=\omega\times (0,1)$, let $\varphi\in C^\infty_c(\Omega)$ and compute 
$$
\int_\Omega \frac{\partial v}{\partial x_d}\phi \,dx= -\int_\Omega \frac{\partial \phi}{\partial x_d}v \,dx=
-\lim_{t_\e\to 0}\int_\Omega \frac{\phi(x+t_\e e_d)-\phi(x)}{t_\e}v(x) \,dx
$$
$$=
-\lim_{t_\e\to 0}\int_\Omega \frac{v(x-t_\e e_d)-v(x)}{t_\e}\phi(x) \,dx=0,
$$
since we can choose $t_\e\in2\e\mathbb Z$. This shows that $\frac{\partial v}{\partial x_d}=0$ in the sense of distributions, and hence $v=v(x')$. Moreover,
$$
\int_{\omega'\times (0,1)} |u_\e(x)-v(x')|dx\sim \frac1{\e}\int_{\omega'\times (0,\e)} |u_\e(x)-v(x')|dx
$$
$$
= \int_{\omega'\times (0,1)} |v_\e(x)-v(x')|dx,
$$
where $v_\e(x)= u_\e(x',\e x_d)$, so that we recover the definition by scaling.

\smallskip
This convergence is adapted to the energies $F_{\e,s}$. This is proved by a Compactness Theorem stating that if $s=s_\e\to 1^-$ and $u_\e$ is a sequence with equibounded $F_{\e,s}(u_\e)$, then, up to subsequences, $u_\e\to u$ in the sense above, and moreover $u\in H^1(\omega)$. 

\begin{theorem}[Dimension-reduction compactness for thick thin films] Let $\e\to 0^+$, $s=s_\e\to 1^-$, and let $u_\e$ be a sequence such that 
\begin{equation}\label{limitaz}\sup_\e \Big(F_{\e,s}(u_\e)+ \frac1\e\int_{\Omega_\e}|u_\e|^2dx\Big)=:S<+\infty.\end{equation} 
Furthermore we assume that
\begin{equation}\label{11}
\limsup_{\e\to 0} \frac{1-s}{\e^2}<+\infty
\end{equation}
Then, up to subsequences, there exists a function $u\in H^1(\omega)$ such that $u_\e\to u$. \end{theorem}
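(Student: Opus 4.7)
The plan is to reduce the problem to the Bourgain--Brezis--Mironescu compactness on the \emph{fixed} cylinder $\omega\times(0,1)$, applied to the reflection-periodic extension $\tilde u_\e$, and then identify the limit using the vanishing period in $x_d$. The key geometric observation is that $\tilde u_\e$ factors through a $1$-Lipschitz folding map. Precisely, let $\theta\colon\mathbb R\to[0,\e]$ be the $2\e$-periodic continuous function equal to $|t|$ on $(-\e,\e)$, and set $\bar\tau(x',x_d):=(x',\theta(x_d))$. Then $\theta$ is $1$-Lipschitz, so $|\bar\tau(x)-\bar\tau(y)|\le|x-y|$, and on every period cell $\omega\times(2k\e,(2k+2)\e)$ the map $\bar\tau$ is $2$-to-$1$ onto $\Omega_\e$ and piecewise an isometry. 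By construction, $\tilde u_\e=u_\e\circ\bar\tau$.

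From this folding description two uniform estimates follow by a simple change of variables. First, the $L^2$-part of \eqref{limitaz} together with the $(1/\e)$-to-$1$ covering on $\omega\times(0,1)$ yields
\[
\|\tilde u_\e\|_{L^2(\omega\times(0,1))}^2=\frac{1}{\e}\|u_\e\|_{L^2(\Omega_\e)}^2\le S.
\]
Second, using $|\bar\tau(x)-\bar\tau(y)|\le|x-y|$ in the denominator and the same change of variables in the numerator gives
\[
[\tilde u_\e]_{H^s(\omega\times(0,1))}^2\le\frac{1}{\e^2}[u_\e]_{H^s(\Omega_\e)}^2=\frac{F_{\e,s}(u_\e)}{\e^2(1-s)}\le\frac{S}{\e^2(1-s)},
\]
so, multiplying by $(1-s)$ and invoking \eqref{11},
\[
(1-s)\,[\tilde u_\e]_{H^s(\omega\times(0,1))}^2\le\frac{S}{\e^2}\le\frac{CS}{1-s}.
\]

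With these two bounds in hand, I would apply a Bourgain--Brezis--Mironescu-type compactness on the fixed domain $\omega\times(0,1)$: together with the $L^2$-bound, the renormalized Gagliardo bound of the last display is exactly of the scale that forces an equi-continuity estimate of Fréchet--Kolmogorov type on $\tilde u_\e$ (via the standard Fourier-side inequality $\int|w(\cdot+h)-w|^2\le C|h|^{2s}(1-s)[w]_{H^s}^2$), producing a subsequence still called $\tilde u_\e$ converging strongly in $L^1(\omega'\times(0,1))$ for every $\omega'\subset\subset\omega$ to some $v\in L^2(\omega\times(0,1))$, with $v$ enjoying the regularity $v\in H^1(\omega\times(0,1))$ in the limit $s\to1$.

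Finally, the structure $v(x)=u(x')$ is exactly the argument given in the paragraph before the theorem: for $\phi\in C_c^\infty(\omega\times(0,1))$, $2\e$-periodicity gives $\int\tilde u_\e(x)[\phi(x+2\e e_d)-\phi(x)]\,dx=0$; dividing by $2\e$ and letting $\e\to0$ yields $\int v\,\partial_{x_d}\phi\,dx=0$, so $v=v(x')=u(x')$. Combined with $v\in H^1$, this gives $u\in H^1(\omega)$, which is the desired conclusion.

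The delicate point is the compactness step: the Gagliardo estimate $(1-s)[\tilde u_\e]_{H^s}^2\le S/\e^2$ is not uniformly bounded, and only the rescaling $\e^2(1-s)\ge c(1-s)^2$ provided by \eqref{11} brings it into the BBM regime. The main obstacle is therefore to formulate a Bourgain--Brezis--Mironescu-type compactness with bound growing like $1/(1-s)$ and still yielding strong $L^1_{\mathrm{loc}}$ precompactness in the limit $s\to1$; this is where the extra hypothesis \eqref{11} is essentially used, and where most of the technical work of the proof will concentrate.
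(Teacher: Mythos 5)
Your overall frame coincides with the paper's: extend $u_\e$ by reflection--periodicity to the fixed cylinder $\Omega=\omega\times(0,1)$, prove a uniform Bourgain--Brezis--Mironescu bound \eqref{otto}, extract an $L^2$-precompact subsequence whose limits lie in $H^1(\Omega)$, and identify the limit as a function of $x'$ alone via the vanishing period (this last step, and the $L^2$ bound $\|\tilde u_\e\|^2_{L^2(\Omega)}\le CS$, are fine and are exactly what the paper does). The gap is the central estimate, and you acknowledge it yourself: your folding inequality gives only $(1-s)[\tilde u_\e]^2_{H^s(\Omega)}\le C\e^{-2}F_{\e,s}(u_\e)\le CS/(1-s)$, and a bound of order $1/(1-s)$ on $(1-s)[\,\cdot\,]^2_{H^s}$ carries no compactness: by your own translation estimate the modulus of continuity it yields is $C|h|^{2s}/(1-s)$, which blows up as $s\to1$ (at the level of the BBM limit it would correspond to $\|\nabla w\|^2_{L^2}\lesssim (1-s)^{-1}$). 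There is no ``BBM-type compactness with bound growing like $1/(1-s)$'' to be formulated; the step you defer as ``where most of the technical work will concentrate'' is precisely the proof, so the proposal does not close.

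The reason your estimate is lossy points to the missing idea. The inequality $|\bar\tau(x)-\bar\tau(y)|\le|x-y|$ is catastrophic for pairs with $|x-y|\gg\e$: folding collapses distances of order $1$ to distances of order $\e$, so the long-range interactions are grossly over-counted, and they are what produce the bad global factor $\e^{-2}$. The paper instead splits the double integral at the scale $r\sqrt{1-s}$, which by \eqref{11}--\eqref{12} is comparable to (at most) $rM\e$. The far part $|x-y|>r\sqrt{1-s}$ is estimated using \emph{only} the $L^2$ part of \eqref{limitaz}, since $\int_{|\xi|>r\sqrt{1-s}}|\xi|^{-d-2s}\,d\xi\sim r^{-2s}(1-s)^{-s}$, so after multiplication by $(1-s)$ it stays bounded (estimate \eqref{13}); no Gagliardo seminorm of $u_\e$ can control this region with uniform constants. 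The near part $|x-y|<rM\e$ is where periodicity is used: each period cell interacts with only $O(rM)$ neighbouring cells, and the comparison $|x-y|\le C|x-y+2\e m e_d|$ (valid on half-cells for such short pairs), together with the reflection inequality for the cross terms, folds everything back to $\Omega_\e\times\Omega_\e$ at the cost of a single factor of order $1/\e$ (the number of cells) times $rM$, i.e.\ the bound \eqref{15} in terms of $F_{\e,s}(u_\e)$ --- one full factor $1/\e\sim(1-s)^{-1/2}$ better than your global bound on exactly the part of the integral that matters. Without this scale decomposition (near part by periodicity with few neighbours, far part by the $L^2$ bound) the approach cannot be repaired, so as it stands the argument has a genuine gap at its main step.
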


\begin{remark}\label{re1}\rm Condition \eqref{11} requires that the thickness of the thin film is not too small with respect to $1-s$; that is, there exists $M>0$ such that
\begin{equation}\label{12}
\frac1M\sqrt{1-s}\le \e.
\end{equation}
The same condition appears in \cite{BBD} in order to allow for homogenization.
Note that \eqref{12} implies that  $1\ge \e^{2(1-s)}\ge \frac1{M^{2(1-s)}}(1-s)^{1-s}\to 1$, so that  $\e^{2(1-s)}\to 1$.
\end{remark}

\begin{proof} We want to apply the Bourgain-Brezis and Mironescu result to the ($2\e$-periodic) 
sequence $u_\e$ on $\Omega$. To that end, we need to prove that
\begin{equation}\label{otto}
\sup_\e\ (1-s)\int_{\Omega\times\Omega} \frac{|u_\e(x)-u_\e(y)|^2}{|x-y|^{d+2s}} dx\,dy<+\infty.
\end{equation}
 We first give a bound for 
\begin{eqnarray*}
&&\hskip-1cm(1-s)\int_{\{(x,y)\in \Omega\times\Omega:|x-y|>r\sqrt{1-s}\}} \frac{|u_\e(x)-u_\e(y)|^2}{|x-y|^{d+2s}} dx\,dy
\\&&\le C
(1-s)\int_{\{(x,y)\in \Omega\times\Omega:|x-y|>r\sqrt{1-s}\}} \frac{|u_\e(x)|^2+|u_\e(y)|^2}{|x-y|^{d+2s}} dx\,dy
\\&&\le C
(1-s)\int_{\{(x,y)\in \Omega\times\Omega:|x-y|>r\sqrt{1-s}\}} \frac{|u_\e(x)|^2}{|x-y|^{d+2s}} dx\,dy
\\&&\le C
(1-s)\int_{\Omega} |u_\e(x)|^2\int_{\mathbb R^d\setminus B_{r\sqrt{1-s}}}\frac1{|\xi|^{d+2s}} d\xi\,dx
\\&&\le C
(1-s)\frac1{r^{2s}(1-s)^s}\int_{\Omega} |u_\e(x)|^2\,dx.
\end{eqnarray*}
Hence, we have 
\begin{equation}\label{13}
(1-s)\int_{\{(x,y)\in \Omega\times\Omega:|x-y|>r\e\}} \frac{|u_\e(x)-u_\e(y)|^2}{|x-y|^{d+2s}} dx\,dy\le C\frac1{r^{2s}},
\end{equation}
with the constant $C$ depending only on  $M$ and $S$.

We can now estimate 
$$
(1-s)\int_{\{(x,y)\in \Omega\times\Omega:|x-y|<r\sqrt{1-s}\}} \frac{|u_\e(x)-u_\e(y)|^2}{|x-y|^{d+2s}} dx\,dy\,
$$
$$
=(1-s)\int_{\{(x,y)\in \Omega\times\Omega:|x-y|<r\sqrt{1-s}\}} \frac{|u_\e(x)-u_\e(y)|^2}{|x-y|^{d+2s}} dx\,dy\,
$$
$$
\le(1-s)\sum_{k=0}^{\lfloor1/2\e\rfloor+1}\int_{\omega\times(-\e,\e)+2\e k e_d}
\int_{\{y\in\Omega:|x-y|<r\sqrt{1-s}\}} \frac{|u_\e(x)-u_\e(y)|^2}{|x-y|^{d+2s}} dx\,dy\,
$$
$$
\le(1-s)\sum_{k=0}^{\lfloor1/2\e\rfloor+1}\int_{\omega\times(-\e,\e)+2\e k e_d}
\int_{\{y\in\Omega:|x-y|<rM\e\}} \frac{|u_\e(x)-u_\e(y)|^2}{|x-y|^{d+2s}} dx\,dy\,,
$$
where we have used \eqref{12}. Note now that if $x\in \omega\times(-\e,\e)+2\e k e_d$ then
$$
\{y\in\Omega:|x-y|<rM\e\}\subset \bigcup_{\ell=\lfloor -rM/2\rfloor}^{\lfloor rM/2\rfloor+1}( \omega\times(-\e,\e)+2\e (k+\ell) e_d),
$$
and that {
if $x,y\in \omega\times(-\e,0)$ and $|x-y|\le rM\e$ then 
$$
|x-y|\le C|x-y+2\e m e_d|
$$
for all $m\in\mathbb Z$.} We then deduce that
\begin{eqnarray*}\nonumber
(1-s)\int_{\{(x,y)\in \Omega\times\Omega:|x-y|<r\sqrt{1-s}\}} 
\frac{|u_\e(x)-u_\e(y)|^2}{|x-y|^{d+2s}} dx\,dy\,
\\ \nonumber
\le(1-s)\Big(\frac1{2\e}+1\Big) (rM+2)\int_{(\omega\times(-\e,\e))^2}
\frac{|u_\e(x)-u_\e(y)|^2}{|x-y|^{d+2s}} dx\,dy.
\end{eqnarray*}
It remain now to observe that
$$
\int_{\omega\times(0,\e)}\int_{\omega\times(-\e,0)}\frac{|u_\e(x)-u_\e(y)|^2}{|x-y|^{d+2s}} dx\,dy
\le \int_{(\omega\times(0,\e)^2)}\frac{|u_\e(x)-u_\e(y)|^2}{|x-y|^{d+2s}} dx\,dy
$$
to deduce that
\begin{equation}\label{15}
(1-s)\int_{\{(x,y)\in \Omega\times\Omega:|x-y|<r\sqrt{1-s}\}} 
\frac{|u_\e(x)-u_\e(y)|^2}{|x-y|^{d+2s}} dx\,dy\,\le CrS.
\end{equation}
From \eqref{8} and \eqref{15} we deduce the validity of \eqref{otto}. Since by \eqref{limitaz} the sequence $u_\e$ is also bounded in $L^2(\Omega)$ we conclude that is it precompact in $L^2(\Omega)$, and that its limits are in $H^1(\Omega)$. The claim then follows by Remark \ref{re1}.
\end{proof} 

\end{document}